\title[On quotients of Riemann zeta values]
{On quotients of Riemann zeta values\\at odd and even integer arguments}
\author{Bernd C. Kellner}
\subjclass[2010]{11M06, 11R09 (Primary) 11B73 (Secondary)}
\email{bk@bernoulli.org}
\keywords{Riemann zeta values, Eisenstein polynomial, Stirling numbers}
\newcommand{\RR}{\mathbb{R}}
\newcommand{\CC}{\mathbb{C}}
\newcommand{\QQ}{\mathbb{Q}}
\newcommand{\ZZ}{\mathbb{Z}}
\newcommand{\NN}{\mathbb{N}}
\newcommand{\SE}{\mathcal{E}}
\newcommand{\HF}{\mathcal{H}}
\newcommand{\HN}{\mathbf{H}}
\newcommand{\LF}{\mathcal{L}}
\newcommand{\LFS}{\mathcal{L}^\star}
\newcommand{\ST}{\mathbf{S}}
\newcommand{\SF}[2]{\genfrac{\langle}{\rangle}{0pt}{}{#1}{#2}}
\newcommand{\fs}{\mathbf{F}}
\newcommand{\fh}{\widehat{\mathbf{F}}}
\newcommand{\pp}{\mathfrak{p}}
\newcommand{\qq}{\mathfrak{q}}
\DeclareMathOperator{\re}{Re}
\DeclareMathOperator{\ord}{ord}
\newcommand{\pdiv}{\mid}
\newcommand{\Dop}{\nabla}
\newcommand{\valueat}[1]{{\,}_{\big|\, #1}}
\newcommand{\coeff}[1]{\left[#1\right]}
\newtheorem{prop}{Proposition}[section]
\newtheorem{lemma}[prop]{Lemma}
\newtheorem{theorem}[prop]{Theorem}
\newtheorem{conj}[prop]{Conjecture}
\theoremstyle{definition}
\newtheorem{tbl}[prop]{Table}
\numberwithin{equation}{section}
\begin{document}

\begin{abstract}
We show for even positive integers $n$ that the quotient of the Riemann
zeta values $\zeta( n+1 )$ and $\zeta( n )$ satisfies the equation
\[
  \frac{\zeta( n+1 )}{\zeta( n )} = \left( 1-\frac{1}{n} \right)
    \left( 1-\frac{1}{2^{n+1}-1} \right)
    \frac{\mathcal{L}^\star(\mathfrak{p}_n)}{\mathfrak{p}_n'(0)},
\]
where $\mathfrak{p}_n \in \mathbb{Z}[x]$ is a certain monic polynomial of
degree $n$ and $\mathcal{L}^\star: \mathbb{C}[x] \to \mathbb{C}$ is a linear
functional, which is connected with a special Dirichlet series. There exists the
decomposition $\mathfrak{p}_n(x) = x(x+1) \mathfrak{q}_n(x)$. If $n = p+1$
where $p$ is an odd prime, then $\mathfrak{q}_n$ is an Eisenstein polynomial
and therefore irreducible over $\mathbb{Z}[x]$.
\end{abstract}

\maketitle

\section{Introduction}

Euler's beautiful formula
\begin{equation} \label{eq:zeta-euler}
  \zeta(n) = -\frac{1}{2} \frac{(2\pi i)^n}{n!} B_n \quad (n \in 2\NN)
\end{equation}
establishes a relationship between the Riemann zeta function
\[
  \zeta(s) = \sum_{n \geq 1} n^{-s} \quad (s \in \CC, \; \re(s) > 1)
\]
at even positive integer arguments and the Bernoulli numbers $B_n$ defined by
\[
  \frac{s}{e^s-1} = \sum_{n \geq 0} B_n \frac{s^n}{n!} \quad (|s| < 2 \pi).
\]
These numbers are rational where $B_n = 0$ for odd $n > 1$.
The functional equation (cf.~\cite{Patterson:1995})
\begin{equation} \label{eq:zeta-func-eq}
  \zeta(1-s) = 2 (2\pi)^{-s} \cos\left( \frac{\pi s}{2} \right)
    \Gamma(s)\zeta(s) \quad (s \in \CC)
\end{equation}
leads to the values at negative integer arguments given by
\begin{equation} \label{eq:zeta-1-n}
  \zeta( 1-n ) = - \frac{ B_{n} }{n} \quad (n \in \NN, \; n \geq 2),
\end{equation}
which have remarkable $p$-adic properties. Sequences of these divided Bernoulli
numbers in certain arithmetic progressions encode information about zeros of
nontrivial $p$-adic zeta functions; so far only unique simple zeros have been
found, see \cite{Kellner:2007} and \cite{Koblitz:1996} for the theory.
\medskip

It remains an open question whether there exists a closed formula for
$\zeta(3), \zeta(5), \zeta(7), \ldots$ with ``certain" constants in the sense
of \eqref{eq:zeta-euler}. However, our goal is to show some properties of the
quotients $\zeta( n+1 ) / \zeta( n )$ for $n \in 2\NN$ relating them to a
``logarithmic part".
\medskip

Define the linear forward difference operator $\Dop$ and its powers by
\[
  \Dop^n f(x) = \sum_{\nu=0}^n (-1)^{n-\nu} \binom{n}{\nu} f(x+\nu)
\]
for integers $n \geq 0$ and any function $f: \CC \to \CC$.
We use the expression, for example,
\[
  \Dop^n f(x+y) \valueat{x = 1}
\]
to indicate the variable and an initial value when needed.
\medskip

In 1930 Hasse \cite{Hasse:1930} constructed a globally convergent series
representation of $\zeta$. He further showed that the following series
representation of the Dirichlet eta function, derived by Knopp via Euler
transformation \cite{Knopp:1922},
\begin{equation} \label{eq:eta-hasse}
  \eta(s) = ( 1 - 2^{1-s} ) \zeta(s) = \sum_{n \geq 0} \frac{(-1)^n}{2^{n+1}}
    \Dop^n x^{-s} \valueat{x = 1}
\end{equation}
is also valid for all $s \in \CC$ and uniformly convergent on any compact
subset. Compared to the first series of Hasse mentioned above, this gives
a globally and faster convergent series of $\zeta$ except for the set
\[
  \SE = \{ 1 + 2\pi i \, n / \log 2 : n \in \ZZ \}.
\]
The derivative of \eqref{eq:eta-hasse} leads to the function,
see \cite{Guillera&Sondow:2008} for a wider context,
\begin{equation} \label{eq:h-def}
  \HF(s) = 2 \eta'(s)
    = \sum_{n \geq 1} \frac{(-1)^{n+1}}{2^n} \Dop^n x^{-s} \log x
    \valueat{x=1} \quad (s \in \CC),
\end{equation}
which satisfies
\begin{equation} \label{eq:h-zeta}
  \HF(s) = 2^{2-s} \zeta(s) \log 2 + 2(1-2^{1-s}) \zeta'(s) \quad (s \in \CC).
\end{equation}

For now, let $n \in 2\NN$. Note that $n \notin \SE$. Considering the functional
equation \eqref{eq:zeta-func-eq} and using \eqref{eq:h-zeta}, one easily sees that
\[
  \zeta( n+1 ) = 2 \frac{(2\pi i)^n}{n!} \zeta'(-n)
    = - \frac{(2\pi i)^n}{n!} \frac{\HF(-n)}{2^{n+1}-1},
\]
since $\zeta(-n)=0$ is a trivial zero by \eqref{eq:zeta-1-n}.
By Euler's formula \eqref{eq:zeta-euler} we finally derive that
\begin{equation} \label{eq:zeta-quot}
  \frac{\zeta( n+1 )}{\zeta( n )} = \frac{2}{B_n} \frac{\HF(-n)}{2^{n+1}-1}.
\end{equation}

We will consider a special Dirichlet series that is connected with \eqref{eq:h-def}.
Let $o(\cdot)$ be Landau's little $o$ symbol.

\begin{theorem} \label{thm:l-func}
The Dirichlet series
\[
  \LF(s) = \sum_{n \geq 1} a_n n^{-s} \quad (s \in \CC)
\]
is an entire function on $\CC$ where
\[
  a_n = \frac{(-1)^{n+1}}{2^n} \Dop^n \log x \valueat{x = 1} = o(2^{-n}).
\]
More precisely, the values
\[
  l_n = (-1)^{n+1} \Dop^n \log x \valueat{x = 1} \in (0,\log 2]
    \quad (n \geq 1)
\]
define a strictly decreasing sequence with limit $0$.
The linear functional
\[
  \LFS: \CC[x] \to \CC, \quad  \LFS(f) = \sum_{n \geq 1} a_n f(n),
\]
is absolutely convergent for any $f \in \CC[x]$. A special value is given by
\[
  \LF(0)=\LFS(1)=\log(\pi/2).
\]
\end{theorem}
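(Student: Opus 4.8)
The plan is to reduce every assertion to a single integral representation of the numbers $l_n$, from which the analytic consequences follow almost mechanically. Writing out the difference explicitly,
\[
  l_n = (-1)^{n+1}\sum_{\nu=0}^n (-1)^{n-\nu}\binom{n}{\nu}\log(1+\nu)
      = -\sum_{\nu=0}^n (-1)^{\nu}\binom{n}{\nu}\log(1+\nu),
\]
I would substitute Frullani's integral $\log(1+\nu)=\int_0^\infty \frac{e^{-t}-e^{-(1+\nu)t}}{t}\,dt$. Since the sum is finite, interchanging it with the integral is mere linearity; because $\sum_{\nu=0}^n(-1)^\nu\binom{n}{\nu}=0$ for $n\ge 1$ the $e^{-t}$ term drops out, and the surviving sum collapses by the binomial theorem to $(1-e^{-t})^n$. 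After bookkeeping the signs this gives
\[
  l_n = \int_0^\infty \frac{e^{-t}}{t}\,(1-e^{-t})^n\,dt,
\]
which converges since the integrand is $\sim t^{n-1}$ near $0$ and decays exponentially at $\infty$. I expect recognizing that Frullani together with the binomial collapse is the right manoeuvre to be the one creative step; everything after it is routine.

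From this representation the sequential claims are immediate. The integrand is strictly positive on $(0,\infty)$, so $l_n>0$; subtracting gives
\[
  l_{n+1}-l_n = -\int_0^\infty \frac{e^{-2t}}{t}\,(1-e^{-t})^n\,dt < 0,
\]
so $(l_n)$ is strictly decreasing, and a second Frullani evaluation yields $l_1=\log 2$, whence $l_n\in(0,\log 2]$ with equality only at $n=1$. For the limit I would dominate $(1-e^{-t})^n\le (1-e^{-t})$ uniformly in $n\ge 1$, so that the integrable $n=1$ integrand dominates the whole family; as $(1-e^{-t})^n\to 0$ pointwise, dominated convergence gives $l_n\to 0$. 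Since $a_n = 2^{-n}l_n$, this establishes $a_n=o(2^{-n})$.

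The remaining analytic statements follow from the uniform bound $|a_n|\le 2^{-n}\log 2$. On a compact set one has $\re s\ge -\sigma$ for some $\sigma$, hence $|a_n n^{-s}|\le 2^{-n}(\log 2)\,n^{\sigma}$ with $\sum_n 2^{-n}n^\sigma<\infty$; the Weierstrass $M$-test then gives locally uniform convergence, so $\LF$ is entire. The same estimate with $n^{-s}$ replaced by the polynomially bounded $f(n)$ shows $\LFS(f)=\sum_{n\ge1} a_n f(n)$ converges absolutely for every $f\in\CC[x]$.

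For the special value I would avoid evaluating the integral and instead compare series. Setting $s=0$ in the defining series \eqref{eq:h-def} turns $x^{-s}$ into $1$, so its $n$-th term becomes exactly $a_n$; thus $\LF(0)=\sum_{n\ge1}a_n=\HF(0)=\LFS(1)$, the outer equalities being immediate from the definitions. Evaluating \eqref{eq:h-zeta} at $s=0$ with $\zeta(0)=-\tfrac12$ and $\zeta'(0)=-\tfrac12\log(2\pi)$ then gives $\HF(0)=-2\log 2+\log(2\pi)=\log(\pi/2)$, completing the proof. (One could instead evaluate $\sum_n 2^{-n}l_n=\int_0^\infty \frac{e^{-t}}{t}\tanh(t/2)\,dt$ directly, but routing through \eqref{eq:h-zeta} is cleaner and reuses machinery already available.)
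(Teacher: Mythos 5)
Your proof is correct, and its central analytic step takes a genuinely different route from the paper's. You derive the Laplace-type representation $l_n=\int_0^\infty t^{-1}e^{-t}(1-e^{-t})^n\,dt$ via Frullani's integral and the binomial collapse, whereas the paper quotes N{\o}rlund's identity $\Dop^n\log x\valueat{x=a}=(-1)^{n-1}(n-1)!\int_a^{a+1}\frac{dt}{t(t+1)\cdots(t+n-1)}$, which gives $l_n=\frac{1}{n}\int_0^1\prod_{k=1}^n(1+t/k)^{-1}\,dt$. Both representations make positivity, $l_1=\log 2$, and strict monotonicity transparent; your route needs dominated convergence to get $l_n\to 0$, while the paper's yields the slightly stronger quantitative fact that $(n\,l_n)_{n\ge1}$ is strictly decreasing, hence $l_n<(\log 2)/n$, from which the limit is immediate. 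On the other hand, your Frullani derivation is self-contained rather than resting on a citation. The remaining parts --- the $M$-test for entirety, polynomial growth of $f(n)$ for absolute convergence of $\LFS$, and the evaluation $\LF(0)=\LFS(1)=\HF(0)=\log(\pi/2)$ via \eqref{eq:h-def} and \eqref{eq:h-zeta} with $\zeta(0)=-\tfrac12$ and $\zeta'(0)=-\tfrac12\log(2\pi)$ --- coincide with the paper's argument. One cosmetic caution: since $\int_0^\infty e^{-t}/t\,dt$ diverges, you should combine the $n+1$ Frullani integrands into a single convergent integral before observing that the $e^{-t}$ contribution cancels, rather than splitting the integral termwise; as you only ever integrate the finite combined sum, this is a matter of phrasing, not a gap.
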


\begin{theorem} \label{thm:p-poly}
We have the following relations for $n \geq 1$:
\[
  \HF(-n) = 2^{-n} \LFS( \pp_n )
\]
where
\begin{align*}
  \pp_n(x) &= \sum_{\nu=0}^n (-1)^\nu 2^{n-\nu} \binom{x+\nu}{\nu}
    \Dop^\nu (x+1)^n, \\
  \pp_1(x) &= x + 1, \\
  \pp_2(x) &= x(x+1), \\
  \pp_n(x) &= \begin{cases}
       (x+1)^2 \, \qq_n(x) & \text{if odd} \; n \geq 3, \\
       x(x+1)  \, \qq_n(x) & \text{if even} \; n \geq 4.
    \end{cases}
\end{align*}
The polynomials $\pp_n, \qq_n \in \ZZ[x]$ are monic of degrees $n$, $n-2$,
respectively. The value $\pp_n(0)$ is related to the tangent numbers by
\[
  \pp_n(0) = \tanh^{(n)}(0) = 2^{n+1} ( 2^{n+1} - 1 ) \frac{B_{n+1}}{n+1}
    \quad (n \geq 1).
\]
Special values of the derivative are given by
\begin{alignat*}{3}
  \pp_n'(-1) &= - \pp_{n-1}(0) && (n \geq 2), \\
  \pp_n'(0)  &= (n-1) \, \pp_{n-1}(0) & \quad & (n \in 2\NN).
\end{alignat*}
\end{theorem}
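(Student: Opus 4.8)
The plan is to reduce every assertion to a single closed-form generating function. I would first prove that
\[
  P(x,t) := \sum_{n \geq 0} \pp_n(x) \frac{t^n}{n!} = (1+\tanh t)^{x+1}.
\]
Starting from the definition of $\pp_n$, I interchange the sums over $n$ and $\nu$; since $\Dop^\nu$ (acting on $x$) annihilates $(x+1)^n$ for $n<\nu$, the inner sum over $n$ completes to $\Dop^\nu e^{2t(x+1)} = (e^{2t}-1)^\nu e^{2t(x+1)}$. This leaves
\[
  P(x,t) = e^{2t(x+1)} \sum_{\nu \geq 0} \binom{x+\nu}{\nu} \Bigl( \tfrac{1-e^{2t}}{2} \Bigr)^\nu ,
\]
and the negative-binomial series $\sum_{\nu \geq 0}\binom{x+\nu}{\nu} w^\nu = (1-w)^{-(x+1)}$ collapses this to $e^{2t(x+1)}\bigl(\tfrac{1+e^{2t}}{2}\bigr)^{-(x+1)} = (1+\tanh t)^{x+1}$. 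This summation is the key creative step; everything algebraic then follows by extracting coefficients.

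From $P$ I read off $\pp_1 = x+1$ and $\pp_2 = x(x+1)$ directly. Differentiating in $t$ gives $\partial_t P = (x+1)(1-\tanh t)\,P$, and writing $1-\tanh t = 2-(1+\tanh t)$ together with $(1+\tanh t)P(x,t) = P(x+1,t)$ yields the recurrence
\[
  \pp_n(x) = (x+1)\bigl( 2\pp_{n-1}(x) - \pp_{n-1}(x+1) \bigr) \qquad (n \geq 1),
\]
with $\pp_0 = 1$. By induction this proves at once that $\pp_n \in \ZZ[x]$, that it is monic of degree $n$, and that $(x+1) \mid \pp_n$. Setting $x=0$ and $x=-1$ in $P$ gives $\sum_n \pp_n(0)\,t^n/n! = 1+\tanh t$ and $\sum_n \pp_n(-1)\,t^n/n! = 1$, hence $\pp_n(-1)=0$ for $n\geq 1$ and $\pp_n(0)=\tanh^{(n)}(0)$; the Bernoulli expression then follows from the classical expansion $\tanh t = \sum_{k\geq1} 2^{2k}(2^{2k}-1)B_{2k}\,t^{2k-1}/(2k)!$.

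For the derivative relations I use $\partial_x P = \log(1+\tanh t)\,P$. At $x=-1$ this gives $\sum_n \pp_n'(-1)\,t^n/n! = \log(1+\tanh t)$, whose $t$-derivative is $1-\tanh t$; comparing coefficientwise with $1+\tanh t = \sum_m \pp_m(0)\,t^m/m!$ gives $\pp_n'(-1) = -\pp_{n-1}(0)$ for $n \geq 2$. At $x=0$ I get $\sum_n \pp_n'(0)\,t^n/n! = (1+\tanh t)\log(1+\tanh t)$; taking the even part and using $(1+\tau)\log(1+\tau)+(1-\tau)\log(1-\tau) = \log(1-\tau^2)+\tau\log\frac{1+\tau}{1-\tau}$ with $\tau=\tanh t$, $1-\tau^2=\cosh^{-2} t$, and $\frac{1+\tau}{1-\tau}=e^{2t}$, the even part equals $t\tanh t - \log\cosh t = \sum_n (n-1)\pp_{n-1}(0)\,t^n/n!$, which gives $\pp_n'(0)=(n-1)\pp_{n-1}(0)$ for even $n$. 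The decomposition is then immediate: $(x+1)\mid\pp_n$ always; for even $n\geq 4$ also $x\mid\pp_n$ since $\pp_n(0)=\tanh^{(n)}(0)=0$, and for odd $n\geq 3$ the root at $-1$ is double because $\pp_n'(-1)=-\pp_{n-1}(0)=0$ (as $n-1$ is even). Dividing the monic $\pp_n\in\ZZ[x]$ by the monic factors $x(x+1)$ resp.\ $(x+1)^2$ produces the monic $\qq_n\in\ZZ[x]$ of degree $n-2$.

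Finally, the analytic identity $\HF(-n)=2^{-n}\LFS(\pp_n)$ is independent of the generating function and is where the real care is needed. Since $x^{-s}\valueat{s=-n}=x^n$, I apply the discrete Leibniz rule
\[
  \Dop^k(x^n\log x)\valueat{x=1} = \sum_{\nu=0}^k \binom{k}{\nu}\bigl(\Dop^\nu\log x\valueat{x=1}\bigr)\bigl(\Dop^{k-\nu}x^n\valueat{x=1+\nu}\bigr)
\]
inside the series for $\HF(-n)$. The terms with $k-\nu>n$ vanish (as $\Dop$ lowers polynomial degree) and the $\nu=0$ term vanishes ($\log 1=0$), so after substituting $\Dop^\nu\log x\valueat{x=1}=(-1)^{\nu+1}2^\nu a_\nu$ and interchanging the two summations one is left with $2^{-n}\sum_{\nu\geq1} a_\nu\,\pp_n(\nu)=2^{-n}\LFS(\pp_n)$, once the inner finite sum over $\mu=k-\nu$ is recognized as $\pp_n(\nu)$ via $\Dop^\mu(x+1)^n\valueat{x=\nu}=\Dop^\mu x^n\valueat{x=1+\nu}$ and $\binom{\nu+\mu}{\mu}=\binom{\nu+\mu}{\nu}$. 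The one genuine obstacle here is justifying this interchange; I would control it by absolute convergence, bounding the inner sum polynomially in $k$ and invoking the decay $a_n=o(2^{-n})$ from Theorem~\ref{thm:l-func}, so that the geometric factor $2^{-k}$ dominates.
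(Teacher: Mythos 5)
Your proposal is correct, and for the polynomial identities it takes a genuinely different route from the paper. The paper never forms a generating function: it proves $\pp_n(0)=\tanh^{(n)}(0)$ by comparing the defining sum at $x=0$ with Hasse's series \eqref{eq:eta-hasse} at $s=-n$, obtains $\pp_n'(-1)=-\pp_{n-1}(0)$ by factoring out $x+1$ and applying L'H\^opital together with Lemma~\ref{lem:diff-xn}, and --- most significantly --- proves $\pp_n'(0)=(n-1)\pp_{n-1}(0)$ by differentiating the explicit sum, which produces harmonic numbers $\HN_\nu$ and forces an appeal to the external Stirling--harmonic identity of Proposition~\ref{prop:fh-ident} from \cite{Kellner:2012}. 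Your closed form $P(x,t)=(1+\tanh t)^{x+1}$ replaces all of this by coefficient extraction; in particular the even-part computation $t\tanh t-\log\cosh t$ makes the $\pp_n'(0)$ relation elementary and self-contained (in effect you re-prove the special case of Proposition~\ref{prop:fh-ident} that the paper imports), and your recurrence $\pp_n(x)=(x+1)\bigl(2\pp_{n-1}(x)-\pp_{n-1}(x+1)\bigr)$ gives integrality, monicity, and the factor $x+1$ in one stroke. I checked the generating function against $\pp_1,\ldots,\pp_4$ and the derivative relations; everything matches. The only point you should make explicit is that the identity for $P$ is one of formal power series in $t$ with coefficients in $\QQ[x]$: each power of $t$ receives contributions from only finitely many $\nu$, so extending the $\nu$-sum, interchanging it with the $n$-sum, and applying the negative-binomial resummation are all legitimate. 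For the remaining identity $\HF(-n)=2^{-n}\LFS(\pp_n)$ your argument is essentially the paper's proof of Proposition~\ref{prop:eq-h-l} read in the opposite direction: the same Leibniz-rule expansion (Lemma~\ref{lem:leibniz}, packaged in the paper as Lemma~\ref{lem:diff-xr-log}) and the same rearrangement of the double sum, justified, as you say, by absolute convergence coming from $a_n=o(2^{-n})$.
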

\medskip

Now, we can state our main result. We further use the notations $\pp_n$
and $\qq_n$ below.

\begin{theorem} \label{thm:quot-zeta}
If $n \in 2\NN$, then
\begin{equation} \label{eq:quot-zeta}
  \frac{\zeta( n+1 )}{\zeta( n )} = \left( 1-\frac{1}{n} \right)
    \left( 1-\frac{1}{2^{n+1}-1} \right) \frac{\LFS( \pp_n )}{\pp_n'(0)}.
\end{equation}
There exists the limit
\[
  \lim_{\substack{n \to \infty\\2 \pdiv n}} \frac{\LFS( \pp_n )}{\pp_n'(0)} = 1.
\]
Generally, if $\alpha_j \in \QQ^\times$ and $n_j \in 2\NN$ for $j=1,\ldots,N$
and fixed $N \geq 1$, where the integers $n_j$ are strictly increasing, then
there exists $\pp \in \QQ[x]$ with $\deg \pp = n_N$ such that
\[
  \sum_{j=1}^N \alpha_j \, \zeta( n_j+1 ) / \zeta( n_j ) = \LFS( \pp ).
\]
\end{theorem}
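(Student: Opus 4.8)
The plan is to assemble \eqref{eq:quot-zeta} directly from the two ingredients already in hand, the identity \eqref{eq:zeta-quot} and Theorem~\ref{thm:p-poly}; no new analysis is needed. First I would substitute $\HF(-n) = 2^{-n}\LFS(\pp_n)$ into \eqref{eq:zeta-quot}, giving
\[
  \frac{\zeta(n+1)}{\zeta(n)} = \frac{2^{1-n}}{B_n(2^{n+1}-1)}\,\LFS(\pp_n).
\]
To convert the prefactor into the asserted product I would eliminate $B_n$ by means of the derivative value from Theorem~\ref{thm:p-poly}. For even $n$ one has $\pp_n'(0) = (n-1)\pp_{n-1}(0)$, and since $n-1$ is odd the tangent-number formula gives $\pp_{n-1}(0) = 2^n(2^n-1)B_n/n$. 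Hence $B_n = n\,\pp_n'(0)/\bigl((n-1)2^n(2^n-1)\bigr)$, which in particular is nonzero because $B_n\neq 0$ for even $n$, so that dividing by $\pp_n'(0)$ is legitimate.

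Substituting this expression for $B_n$ and collecting powers of $2$ (the factors $2^{1-n}$ and $2^n$ combine to $2$) then yields
\[
  \frac{\zeta(n+1)}{\zeta(n)} = \frac{2(n-1)(2^n-1)}{n(2^{n+1}-1)}\,\frac{\LFS(\pp_n)}{\pp_n'(0)},
\]
and the routine check $\frac{n-1}{n}\cdot\frac{2(2^n-1)}{2^{n+1}-1} = \bigl(1-\tfrac1n\bigr)\bigl(1-\tfrac1{2^{n+1}-1}\bigr)$ identifies the prefactor and proves \eqref{eq:quot-zeta}.

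For the limit I would solve \eqref{eq:quot-zeta} for $\LFS(\pp_n)/\pp_n'(0)$ and let $n\to\infty$ through even integers. Both $\zeta(n)$ and $\zeta(n+1)$ tend to $1$, so their quotient does; the two correcting factors $\bigl(1-\tfrac1n\bigr)^{-1}$ and $\bigl(1-\tfrac1{2^{n+1}-1}\bigr)^{-1}$ each tend to $1$ as well, whence the product tends to $1$.

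For the final assertion I would exploit the linearity of $\LFS$ from Theorem~\ref{thm:l-func}. Writing $c_n = \bigl(1-\tfrac1n\bigr)\bigl(1-\tfrac1{2^{n+1}-1}\bigr)\in\QQ^\times$ and setting $\beta_j = \alpha_j c_{n_j}/\pp_{n_j}'(0)$, formula \eqref{eq:quot-zeta} gives $\alpha_j\,\zeta(n_j+1)/\zeta(n_j) = \LFS(\beta_j\pp_{n_j})$ with $\beta_j\in\QQ^\times$, since $\alpha_j$, $c_{n_j}$ and $\pp_{n_j}'(0)$ are all nonzero rationals. Summing and using linearity produces $\LFS(\pp)$ with $\pp = \sum_{j=1}^N \beta_j\pp_{n_j}\in\QQ[x]$; because each $\pp_{n_j}$ is monic of degree $n_j$ and the $n_j$ are strictly increasing, the top-degree term comes solely from $\beta_N\pp_{n_N}$, so $\deg\pp = n_N$. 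All genuine difficulty resides in Theorems~\ref{thm:l-func} and~\ref{thm:p-poly}, which are already established; the present theorem is a bookkeeping assembly, the only points requiring care being the non-vanishing of $\pp_n'(0)$ and the fact that the distinct degrees prevent cancellation of the leading coefficient.
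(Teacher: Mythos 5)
Your proposal is correct and follows essentially the same route as the paper: substitute $\HF(-n)=2^{-n}\LFS(\pp_n)$ into \eqref{eq:zeta-quot}, use $\pp_n'(0)=(n-1)\pp_{n-1}(0)=\frac{n-1}{n}2^n(2^n-1)B_n$ to eliminate $B_n$, deduce the limit from $\zeta(n)\to 1$, and get the last claim from linearity of $\LFS$ plus the fact that the strictly increasing degrees prevent cancellation of the leading term. Your explicit remarks that $\pp_n'(0)\neq 0$ (since $B_n\neq 0$ for even $n$) and that the top coefficient survives are points the paper leaves implicit, but the argument is the same.
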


\begin{tbl}
\begin{alignat*}{5}
  \zeta( 3 ) / \zeta( 2 ) &= &\frac{3}{7} \, \LFS(\pp_2),
    & \quad & \pp_2(x) &= x^2 + x, \\
  \zeta( 5 ) / \zeta( 4 ) &= &-\frac{15}{124} \, \LFS(\pp_4),
    && \pp_4(x) &= x^4 - 2 x^3 - 9 x^2 - 6 x \\
    &&&&&= (x^2 + x)( x^2 - 3x -6 ), \\
  \zeta( 7 ) / \zeta( 6 ) &= &\frac{21}{2032} \, \LFS(\pp_6),
    && \pp_6(x) &= x^6 - 9 x^5 - 15 x^4 + 65 x^3 + 150 x^2 + 80 x \\
    &&&&&= (x^2 + x)(x^4 - 10 x^3 - 5 x^2 + 70 x + 80 ).
\end{alignat*}
\end{tbl}

Of course one can simplify \eqref{eq:quot-zeta} to
\[
  \zeta( n+1 )/\zeta( n ) = ( 2^{n-1} ( 2^{n+1} - 1 ) B_n )^{-1} \,
    \LFS( \pp_n ) \quad (n \in 2\NN),
\]
but this needs again the definition of the Bernoulli numbers.

\begin{theorem} \label{thm:qn-irr}
Let $p$ be an odd prime and $n = p+1$. Then the polynomial $\qq_n$
is an Eisenstein polynomial and consequently irreducible over $\ZZ[x]$.
\end{theorem}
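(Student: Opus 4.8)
The plan is to verify directly the two defining conditions of an Eisenstein polynomial for the monic polynomial $\qq_n$ of degree $n-2=p-1$ (with $n=p+1$): that $p$ divides every non-leading coefficient, equivalently $\qq_n(x)\equiv x^{p-1}\pmod p$, and that $p\mid\qq_n(0)$ while $p^2\nmid\qq_n(0)$.

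I would dispose of the constant term first. From $\pp_n(x)=x(x+1)\qq_n(x)$ one reads off $\qq_n(0)=\pp_n'(0)$, and Theorem~\ref{thm:p-poly} gives $\pp_n'(0)=(n-1)\pp_{n-1}(0)=p\,\pp_p(0)$. Thus $\qq_n(0)=p\,T_p$ where $T_p=\pp_p(0)$ is the tangent number, and it remains to show $T_p\equiv1\pmod p$. Using $T_p=2^{p+1}(2^{p+1}-1)B_{p+1}/(p+1)$ from Theorem~\ref{thm:p-poly}: for $p\ge5$ the number $B_{p+1}$ is $p$-integral, the Kummer congruence gives $B_{p+1}/(p+1)\equiv B_2/2=1/12\pmod p$, while $2^{p+1}(2^{p+1}-1)\equiv4\cdot3=12\pmod p$ by Fermat, so $T_p\equiv1$; the single case $p=3$ I would check by hand from $\qq_4(x)=x^2-3x-6$. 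This yields $v_p(\qq_n(0))=1$.

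The harder half is $\qq_n(x)\equiv x^{p-1}\pmod p$. I would first record the generating function $\sum_{m\ge0}\pp_m(x)\,t^m/m!=(1+\tanh t)^{x+1}$, obtained from the given series for $\pp_m$ by resumming via $\sum_\nu\binom{x+\nu}{\nu}z^\nu=(1-z)^{-(x+1)}$. Setting $P_m(s)=\pp_m(s-1)$ and using that $u=1+\tanh t$ satisfies $u'=2u-u^2$, differentiation of $u^s$ gives the clean recurrence $P_{m+1}(s)=s\bigl(2P_m(s)-P_m(s+1)\bigr)$ with $P_0=1$. The crux is the single congruence
\[
  P_p(s)\equiv s^p\pmod p.
\]
Granting it, one more application of the recurrence gives $P_{p+1}(s)\equiv s\bigl(2s^p-(s+1)^p\bigr)\equiv s(s^p-1)=s^{p+1}-s\pmod p$, so $\pp_n(x)=P_{p+1}(x+1)\equiv(x+1)^{p+1}-(x+1)\equiv x^p(x+1)\pmod p$; cancelling the non-zero-divisor $x(x+1)$ in $\mathbb{F}_p[x]$ then yields $\qq_n(x)\equiv x^{p-1}$.

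To prove $P_p(s)\equiv s^p\pmod p$, I would expand $(1+\tanh t)^s=\sum_k\binom sk(\tanh t)^k$ and use $P_p(s)=p!\,[t^p](1+\tanh t)^s=p!\sum_{k\text{ odd}}\binom sk\,[t^p](\tanh t)^k$, only odd $k$ surviving by parity. The essential step is a $p$-integrality analysis of $\tau_j=[t^{2j+1}]\tanh t=2^{2j+2}(2^{2j+2}-1)B_{2j+2}/(2j+2)!$: by von Staudt--Clausen the only $\tau_j$ in the relevant range with negative $p$-valuation is $\tau_{(p-1)/2}$ (degree $p$), since the a priori dangerous $B_{p-1}$-term at degree $p-2$ is rescued by its Fermat factor $2^{p-1}-1\equiv0\pmod p$, which cancels the pole. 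Hence $[t^p](\tanh t)^k$ is $p$-integral for all $k\ge2$, so for odd $3\le k\le p-2$ the factor $p\mid p!/k!$ kills the term modulo $p$; the term $k=p$ contributes $p!\binom sp=s^{\underline p}\equiv s^p-s\pmod p$, and the term $k=1$ contributes $s\,T_p\equiv s$. Summing gives $s^p$. The main obstacle is precisely this bookkeeping around $t^{p}$ and $t^{p-2}$---the interplay of the factorial denominators, the von Staudt--Clausen pole of $B_{p-1}$, and the compensating Fermat factor---combined with the tangent-number congruence $T_p\equiv1$ that fuses the two surviving contributions into $s^p$.
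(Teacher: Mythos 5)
Your proposal is correct, but it reaches the Eisenstein conditions by a genuinely different route than the paper. The paper never writes down a generating function: it proves $\pp_n^{(k)}(0)\equiv 0\pmod p$ for $1\le k\le n-2$ directly, via an explicit formula for the $k$th derivative of $\pp_n$ (Lemma~\ref{lem:pn-deriv}), the Stirling congruences $\ST_2(p,k)\equiv \ST_1(p,k)\equiv 0\pmod p$ for $1<k<p$ together with Wilson's theorem (Lemmas~\ref{lem:fac-p-1} and \ref{lem:diff-xp}), which collapse the defining sum to the three indices $\nu\in\{0,1,p\}$ and identify $\pp_n^{(k)}(0)$ with the $k$th derivative of $-(x^2+x)\bigl(1+(x-1)_{p-1}\bigr)$ at $0$; it then transfers divisibility from the coefficients $\alpha_\nu$ of $\pp_n$ to the coefficients $\beta_\nu$ of $\qq_n$ by the recursion $\beta_\nu=\alpha_{\nu+1}-\beta_{\nu-1}$, and settles $\ord_p\beta_0=1$ exactly as you do, via $\pp_n'(0)=(n-1)2^n(2^n-1)B_n/n$, Kummer's congruence and Fermat. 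Your route instead packages the whole mod-$p$ structure into the identity $\sum_m\pp_m(x)t^m/m!=(1+\tanh t)^{x+1}$ (which I checked: the resummation gives $e^{2t(x+1)}\bigl(\tfrac{1+e^{2t}}{2}\bigr)^{-(x+1)}=(1+\tanh t)^{x+1}$, consistent with $\pp_n(0)=\tanh^{(n)}(0)$ and $\pp_n(-1)=0$), the recurrence $P_{m+1}(s)=s\bigl(2P_m(s)-P_m(s+1)\bigr)$, and the single Frobenius-type congruence $P_p(s)\equiv s^p\pmod p$, whose proof trades the paper's Stirling-number congruences for a von Staudt--Clausen valuation analysis of the tangent coefficients, with the Fermat factor $2^{p-1}-1$ cancelling the pole of $B_{p-1}$ at degree $p-2$. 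Your version yields the stronger statement $\pp_{p+1}(x)\equiv x^p(x+1)\pmod p$ (the paper only needs, and only proves, vanishing of the coefficients $\alpha_1,\dots,\alpha_{n-2}$), and the generating function and recurrence are of independent interest; the paper's argument stays entirely within the finite-difference and Stirling-number machinery it has already built and avoids invoking von Staudt--Clausen. Two small points to tidy up in a write-up: for $p=3$ the Kummer congruence is not applicable (since $2\equiv 0\pmod{p-1}$), so $T_3\equiv 1\pmod 3$ must also be checked directly where it enters the proof of $P_3(s)\equiv s^3$, not only for the constant term; and the range ``odd $3\le k\le p-2$'' should be noted to be empty in that case.
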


On the basis of some computations, see also Table~\ref{tbl:comp-q},
we raise the following conjecture.

\begin{conj}
The polynomials $\qq_n$ are irreducible over $\ZZ[x]$ for all $n \geq 4$.
\end{conj}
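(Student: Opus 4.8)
The plan is to prove irreducibility for every $n\ge 4$ by the same mechanism that yields Theorem~\ref{thm:qn-irr}, but with a prime chosen adaptively in $n$. Recall that for $n=p+1$ the Eisenstein prime is $p=n-1$: it divides the constant term $\qq_n(0)=(n-1)\pp_{n-1}(0)$ exactly once, and one has $\qq_n\equiv x^{n-2}\pmod p$, so the $p$-adic Newton polygon of $\qq_n$ is the single segment from $(0,1)$ to $(n-2,0)$. For general $n$ I would relax ``Eisenstein'' to ``single-segment Newton polygon'': writing $\qq_n(x)=\sum_{i=0}^{n-2}c_i x^i$ with $c_{n-2}=1$, it suffices to find, for each $n$, a prime $p$ for which every point $(i,\ord_p(c_i))$ lies on or above the segment joining $(0,\ord_p(c_0))$ to $(n-2,0)$, with both endpoints on the segment and $\gcd(\ord_p(c_0),n-2)=1$. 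Newton-polygon theory then makes $\qq_n$ irreducible over $\QQ_p$, hence over $\QQ$, and over $\ZZ[x]$ by Gauss's lemma since $\qq_n$ is monic.

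The first step is to control the constant term, the only coefficient with a closed form. From the factorizations in Theorem~\ref{thm:p-poly} one gets $\qq_n(0)=\pp_n(0)=2^{n+1}(2^{n+1}-1)B_{n+1}/(n+1)$ for odd $n$, and $\qq_n(0)=\pp_n'(0)=(n-1)\,2^{n}(2^{n}-1)B_{n}/n$ for even $n$; in both cases this is a tangent number up to an explicit rational factor. Its prime divisors come from three sources: the explicit factor $n-1$ (even $n$), the cyclotomic factors $\Phi_d(2)$ of $2^{n}-1$ or $2^{n+1}-1$, and the numerator of the relevant Bernoulli number. Using von Staudt--Clausen and the Kummer congruences, together with Zsigmondy's theorem to guarantee, except when the relevant index equals $6$, a primitive prime divisor $p$ of $2^{n}-1$ (for even $n$) or of $2^{n+1}-1$ (for odd $n$), I can compute $\ord_p(c_0)$ exactly for each of these candidate primes; the generic expectation is $\ord_p(c_0)=1$, so that an Eisenstein configuration, rather than merely a single-slope one, is within reach.

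The technical heart, and the main obstacle, is the behaviour of the interior coefficients $c_1,\dots,c_{n-3}$, for which no closed form exists. I would extract each $c_i$ from the defining sum
\[
  \pp_n(x)=\sum_{\nu=0}^n(-1)^\nu 2^{n-\nu}\binom{x+\nu}{\nu}\Dop^\nu(x+1)^n
\]
as an alternating combination of Stirling numbers and binomial coefficients, and then bound $\ord_p(c_i)$ at the candidate prime. In the Eisenstein family $n=p+1$ the congruence $\qq_n\equiv x^{n-2}\pmod p$ follows from a Fermat-type collapse of the differences $\Dop^\nu(x+1)^n$ modulo $p$; for a general $n$ no such collapse is available, and the $p$-adic sizes of the interior coefficients are dictated by Stirling and Bernoulli numbers whose prime factorizations are notoriously irregular. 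Establishing the required uniform inequality $\ord_p(c_i)\ge (n-2-i)\,\ord_p(c_0)/(n-2)$ across all $i$, or even the mere existence of a single adequate prime for every $n$, is exactly the point where an unconditional argument is still missing --- which is why the assertion is posed as a conjecture rather than a theorem.

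If the single-prime strategy proves too rigid, I would combine $p$-adic data at several primes, or turn to a Galois-theoretic route suggested by the $\tanh$ connection of Theorems~\ref{thm:l-func} and~\ref{thm:p-poly}: the roots of $\qq_n$ are precisely the roots of $\pp_n$ other than $0$ and $-1$, and showing that this set admits no partition into Galois-stable proper subsets with integral symmetric functions would also force irreducibility. Either route ultimately hinges on the same missing arithmetic input about the interior coefficients, so I expect that to be the decisive difficulty.
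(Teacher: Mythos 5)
This statement is a \emph{conjecture} in the paper: the author proves it only for the family $n=p+1$ with $p$ an odd prime (Theorem~\ref{thm:qn-irr}) and otherwise verifies it by computation for $n\le 10$ (Table~\ref{tbl:comp-q}); no proof for general $n$ exists. Your text, to your credit, does not claim to be a proof either --- it is a strategy (single-slope $p$-adic Newton polygon at a prime chosen adaptively in $n$, relaxing the Eisenstein condition of Theorem~\ref{thm:qn-irr}) together with an explicit admission that its decisive step is missing. So the verdict is: genuine gap, openly acknowledged; as it stands your argument establishes nothing beyond what Theorem~\ref{thm:qn-irr} already gives, since the only primes for which you can actually verify the required valuations are exactly the primes $p=n-1$ of that theorem.

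The gap is concretely twofold. First, even the constant term is not under control. For even $n$ you correctly have $c_0=\qq_n(0)=\pp_n'(0)=(n-1)2^n(2^n-1)B_n/n$, but a Zsigmondy prime $p$ of $2^n-1$ satisfies $n\pdiv p-1$, hence $p\ge n+1$; in the boundary case $p=n+1$ (where $2$ is a primitive root mod $p$) von Staudt--Clausen puts $p$ into the denominator of $B_n$, and the valuation collapses: already for $n=4$ the Zsigmondy prime is $p=5$ and $c_0=\qq_4(0)=-6$ has $\ord_5(c_0)=0$, so the Newton polygon at that prime is flat and useless (irreducibility of $\qq_4$ instead comes from $p=3=n-1$). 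For the remaining primes $p-1>n$, one needs $p$ not to divide the numerator of $B_n/n$, i.e.\ that $(p,n)$ is not an irregular pair; Proposition~\ref{prop:kummer-congr} gives no reduction here, because $n$ is already its own least residue modulo $p-1$, and no method is known to decide such irregularity conditions uniformly in $n$. Second, and more fundamentally, your required inequality $\ord_p(c_i)\ge (n-2-i)\,\ord_p(c_0)/(n-2)$ for the interior coefficients $c_1,\dots,c_{n-3}$ comes with no proof mechanism at all: the collapse $\qq_n\equiv x^{n-2}\pmod p$ in the paper rests on Lemma~\ref{lem:diff-xp} and Lemma~\ref{lem:fac-p-1}, whose proofs use $n-1=p$ in an essential way (Wilson's theorem and the congruences $\ST_1(p,k)\equiv\ST_2(p,k)\equiv 0\pmod p$ for $1<k<p$), and nothing in your outline replaces them when $n-1$ is composite. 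Your fallback suggestions (combining several primes, or ruling out Galois-stable partitions of the roots of $\pp_n$ other than $0$ and $-1$) are named but not developed, so they do not close the gap; the statement rightly remains a conjecture.
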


\section{Preliminaries}

We need some lemmas to prove the theorems in the following. For properties of
binomial coefficients, Stirling numbers, and finite differences we refer to
\cite{Graham&others:1994}.
\medskip

The harmonic numbers are defined by
\[
  \HN_n = \sum_{k=1}^n \frac{1}{k} \quad (n \geq 1).
\]
The Stirling numbers $\ST_1(n,k)$ of the first kind and $\ST_2(n,k)$ of the
second kind are usually defined by
\begin{alignat}{3}
 (x)_n &= \sum_{k=0}^n \ST_1(n,k) x^k   && \quad (n \geq 0), \label{eq:s1-def} \\
  x^n  &= \sum_{k=0}^n \ST_2(n,k) (x)_k && \quad (n \geq 0), \label{eq:s2-def}
\end{alignat}
where the falling factorials are given by
\[
  (x)_0 = 1 \quad \text{and} \quad (x)_k = x(x-1) \cdots (x-k+1)
    \quad \text{for} \quad k \geq 1.
\]
We further use the related numbers
\begin{equation} \label{eq:sf-def}
  \SF{n}{k} = \Dop^k x^n \valueat{x=0} = k! \, \ST_2(n,k),
\end{equation}
which obey the recurrence
\begin{equation} \label{eq:sf-rec}
  \SF{n+1}{k} = k \left( \SF{n}{k} + \SF{n}{k-1} \right).
\end{equation}
Then \eqref{eq:s2-def} turns into
\begin{equation} \label{eq:sf-binom}
  x^n = \sum_{k=0}^n \SF{n}{k} \binom{x}{k} \quad (n \geq 0).
\end{equation}
Note that $\ST_1(n,n)=\ST_2(n,n)=\ST_2(n,1)=1$ and $\ST_1(n,1)=(-1)^{n-1}(n-1)!$
for $n \geq 1$. Further $\ST_1(n,0)=\ST_2(n,0)=\SF{n}{0}=\delta_{n,0}$
for $n \geq 0$ using Kronecker's delta.

\begin{lemma} \label{lem:fac-p-1}
If $p$ is a prime and $k \geq 0$, then
\[
  (1+(x-1)_{p-1})^{(k)} \valueat{x = 0} \equiv -\delta_{k,p-1} \pmod{p}.
\]
\end{lemma}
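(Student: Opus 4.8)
The plan is to reduce the polynomial $1+(x-1)_{p-1}$ modulo $p$ to a single monomial, after which the derivatives are immediate. First I would invoke the classical factorization of $x^p-x$ over $\mathbb{F}_p$: since $x(x-1)\cdots(x-p+1)$ runs over a complete residue system modulo $p$, one has the polynomial congruence $(x)_p \equiv x^p-x \pmod{p}$. Because $(x)_p = x\cdot(x-1)_{p-1}$ and $x^p-x = x(x^{p-1}-1)$, I would cancel the common factor $x$ — legitimate since $\ZZ[x]$ modulo $p$ is an integral domain — to obtain $(x-1)_{p-1} \equiv x^{p-1}-1 \pmod{p}$, and hence
\[
  1+(x-1)_{p-1} \equiv x^{p-1} \pmod{p}.
\]

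Next I would exploit that coefficientwise reduction modulo $p$ commutes with differentiation, since $f \mapsto f^{(k)}$ acts on the coefficients by multiplication with integers. Thus I may differentiate the monomial $x^{p-1}$ in place of the original polynomial. For $0 \le k \le p-1$ its $k$-th derivative equals $(p-1)(p-2)\cdots(p-k)\,x^{\,p-1-k}$, and it vanishes identically for $k>p-1$. Evaluating at $x=0$, the surviving power of $x$ forces the result to be $0$ unless $k=p-1$, in which case the derivative is the constant $(p-1)!$. Wilson's theorem then gives $(p-1)! \equiv -1 \pmod{p}$, so the evaluation equals $-\delta_{k,p-1} \pmod{p}$, which is exactly the assertion.

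I do not expect a genuine obstacle: the argument is short once the reduction $1+(x-1)_{p-1}\equiv x^{p-1}$ is in hand, and it holds uniformly for every prime $p$. The only steps needing a word of justification are the cancellation of the factor $x$ in the congruence and the commutation of the $k$-th derivative with reduction modulo $p$; both are routine, but they should be stated explicitly, since the lemma is an exact congruence rather than an asymptotic statement and the bookkeeping of the single nonvanishing case $k=p-1$ depends on them.
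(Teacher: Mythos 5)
Your proof is correct and follows essentially the same route as the paper: both arguments reduce $1+(x-1)_{p-1}$ to $x^{p-1}$ modulo $p$ and then differentiate, with Wilson's theorem supplying the value $(p-1)!\equiv -1$ in the single surviving case $k=p-1$. The only difference is in how the congruence $1+(x-1)_{p-1}\equiv x^{p-1}\pmod{p}$ is justified --- the paper uses the Stirling-number congruences $\ST_1(p,k)\equiv 0\pmod{p}$ for $1<k<p$ together with Wilson's theorem, while you use the factorization $(x)_p\equiv x^p-x\pmod{p}$ and cancel the factor $x$; these are standard, essentially equivalent facts.
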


\begin{proof}
The case $p=2$ is trivial. Let $p$ be an odd prime. 
By \eqref{eq:s1-def} we obtain that
\[
  1 + (x-1)_{p-1} = 1 + (x)_p/x = 1 + \sum_{k=1}^p \ST_1(p,k) x^{k-1}
    \equiv x^{p-1} \pmod{p}
\]
using the property (\cite[6.51, p.~314]{Graham&others:1994})
\[
  \ST_1(p,k) \equiv 0 \pmod{p} \quad (1 < k < p)
\]
and Wilson's theorem to derive that
$\ST_1(p,1)=(-1)^{p-1}(p-1)! \equiv -1 \pmod{p}$.
Only the $(p-1)$th derivative of $x^{p-1}$ at $x=0$
does not vanish and equals $-1 \pmod{p}$ using Wilson's theorem again.
\end{proof}

\begin{lemma} \label{lem:diff-xn}
Let $k, n \geq 1$ be integers. Then
\[
  \Dop^k x^n \valueat{x = 1} = \SF{n}{k} + \SF{n}{k+1}
    = \frac{1}{k+1} \Dop^{k+1} x^{n+1} \valueat{x = 0}.
\]
\end{lemma}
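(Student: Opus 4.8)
The plan is to treat the lemma as two separate identities: the left-hand equality $\Dop^k x^n\valueat{x=1} = \SF{n}{k}+\SF{n}{k+1}$, and the right-hand equality $\SF{n}{k}+\SF{n}{k+1} = \frac{1}{k+1}\Dop^{k+1}x^{n+1}\valueat{x=0}$. The second of these is essentially a restatement of the recurrence \eqref{eq:sf-rec}, so the real content lies in the first. Throughout I would work on the finite-dimensional space of polynomials, where the difference operator $\Dop$ and the forward shift are genuine linear operators.

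For the first equality I would use the shift operator. Writing $E$ for the forward shift $Ef(x)=f(x+1)$, the defining formula for $\Dop$ immediately gives the operator identity $E = I + \Dop$, where $I$ is the identity. Setting $P(x) = \Dop^k x^n$, which is a polynomial in $x$, the value we want is $P(1) = (EP)\valueat{x=0}$. Applying $E = I + \Dop$ yields $P(1) = P\valueat{x=0} + (\Dop P)\valueat{x=0} = \Dop^k x^n\valueat{x=0} + \Dop^{k+1}x^n\valueat{x=0}$, and by the definition \eqref{eq:sf-def} these two terms are exactly $\SF{n}{k}$ and $\SF{n}{k+1}$. An equivalent route is to expand $x^n$ via \eqref{eq:sf-binom}, use the standard difference rule $\Dop^k\binom{x}{j}=\binom{x}{j-k}$, and evaluate at $x=1$: only the terms $j=k$ and $j=k+1$ survive, since $\binom{1}{0}=\binom{1}{1}=1$ while $\binom{1}{m}=0$ for all other $m$, again producing $\SF{n}{k}+\SF{n}{k+1}$.

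For the second equality I would invoke \eqref{eq:sf-def} to identify $\Dop^{k+1}x^{n+1}\valueat{x=0} = \SF{n+1}{k+1}$, and then apply the recurrence \eqref{eq:sf-rec} in the form $\SF{n+1}{k+1} = (k+1)\bigl(\SF{n}{k}+\SF{n}{k+1}\bigr)$. Dividing by $k+1$ gives precisely $\SF{n}{k}+\SF{n}{k+1}$, closing the chain of equalities.

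The computation is short, and there is no substantial obstacle. The only point requiring a word of care is the justification of $E = I + \Dop$ together with the interchange of $E$ and $\Dop^k$; both are routine because all the operators involved act on the space of polynomials of bounded degree and commute there. I would therefore expect the proof to be essentially a two-line application of \eqref{eq:sf-def} and \eqref{eq:sf-rec}, with the shift identity supplying the first equality.
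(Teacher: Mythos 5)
Your proposal is correct and follows essentially the same route as the paper: the paper's proof also rests on the identity $\Dop^k x^n \valueat{x=1} = \Dop^k x^n \valueat{x=0} + \Dop^{k+1} x^n \valueat{x=0}$ (your shift-operator step $E = I + \Dop$ made explicit) combined with \eqref{eq:sf-def} and \eqref{eq:sf-rec}. No gaps.
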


\begin{proof}
This follows by
\[
  \Dop^k x^n \valueat{x = 1} = \Dop^k x^n \valueat{x = 0}
    + \Dop^{k+1} x^n \valueat{x = 0}
\]
and using \eqref{eq:sf-def} and \eqref{eq:sf-rec}.
\end{proof}

\begin{lemma} \label{lem:diff-xp}
If $p$ is a prime and $k \geq 0$, then
\[
  \frac{1}{k!} \Dop^k x^p \valueat{x = 1} \equiv d_{p,k} \pmod{p}
\]
where $d_{p,k} = 1$ for $k=0,1,p$ and $d_{p,k}=0$ otherwise.
\end{lemma}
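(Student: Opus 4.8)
The plan is to reduce $x^p$ to $x$ by Fermat's little theorem applied termwise to the difference, and to treat the top index $k=p$ by hand. Write
\[
  N_k = \frac{1}{k!}\,\Dop^k x^p \valueat{x=1}.
\]
By Lemma~\ref{lem:diff-xn} and \eqref{eq:sf-def} one has $\Dop^k x^p\valueat{x=1}=\SF{p}{k}+\SF{p}{k+1}$ with $\SF{p}{j}=j!\,\ST_2(p,j)$, so $k!$ divides this value and $N_k\in\ZZ$; thus the asserted congruence makes sense. Since $x^p$ has degree $p$ and $\Dop^k$ annihilates every polynomial of degree $<k$, we get $N_k=0=d_{p,k}$ for all $k>p$, and only the range $0\le k\le p$ remains.

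For $0\le k\le p-1$ I would expand the difference and apply Fermat's little theorem to each term:
\[
  \Dop^k x^p\valueat{x=1}
    = \sum_{\nu=0}^k (-1)^{k-\nu}\binom{k}{\nu}(1+\nu)^p
    \equiv \sum_{\nu=0}^k (-1)^{k-\nu}\binom{k}{\nu}(1+\nu)
    = \Dop^k x\valueat{x=1} \pmod{p},
\]
using $(1+\nu)^p\equiv 1+\nu\pmod p$. The right-hand value is immediate: since $x$ is linear, $\Dop^k x\valueat{x=1}=1$ for $k=0,1$ and $\Dop^k x\valueat{x=1}=0$ for $k\ge2$, i.e.\ it equals $k!\,d_{p,k}$ throughout this range. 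Writing $\Dop^k x\valueat{x=1}=k!\,M_k$ with $M_k=d_{p,k}\in\ZZ$, and recalling $\Dop^k x^p\valueat{x=1}=k!\,N_k$, the displayed congruence becomes $k!\,N_k\equiv k!\,M_k\pmod p$; since $k<p$ forces $\gcd(k!,p)=1$, we may cancel $k!$ to obtain $N_k\equiv M_k=d_{p,k}\pmod p$.

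It remains to settle the single boundary index $k=p$, which is exactly where the above cancellation breaks down because $p\mid p!$. Here I would compute the difference directly: $\Dop^p$ applied to the monic polynomial $x^p$ of degree $p$ yields the constant $p!$, so $\Dop^p x^p\valueat{x=1}=p!$ and hence $N_p=1=d_{p,p}$. The main obstacle is thus not any deep estimate but the careful separation of the generic range $k<p$ from the top term $k=p$: for $k<p$ the argument hinges on the legitimacy of dividing the Fermat congruence by $k!$ (guaranteed by the integrality of both $N_k$ and $M_k$ and by $k!$ being a unit modulo $p$), whereas for $k=p$ one must fall back on the exact evaluation $\Dop^p x^p=p!$ rather than on the termwise reduction. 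An alternative route would instead rewrite $N_k=\ST_2(p,k)+(k+1)\,\ST_2(p,k+1)$ and reduce the Stirling numbers via the identity $(x)_p\equiv x^p-x$ in $\mathbb{F}_p[x]$, but there the delicate index shifts to $k=p-1$, where the factor $k+1=p$ must supply the vanishing.
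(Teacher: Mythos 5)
Your proof is correct, but it takes a genuinely different route from the paper's. The paper disposes of the lemma in one line: by Lemma~\ref{lem:diff-xn} and \eqref{eq:sf-def} it writes $\frac{1}{k!}\Dop^k x^p\valueat{x=1}=\ST_2(p,k)+(k+1)\ST_2(p,k+1)$ and then invokes the classical congruence $\ST_2(p,k)\equiv 0\pmod p$ for $1<k<p$; all cases, including the boundary ones, fall out uniformly (at $k=p-1$ the explicit factor $k+1=p$ annihilates the term $\ST_2(p,p)=1$ --- precisely the delicate index shift you flag in your closing sentence, so your ``alternative route'' is in fact the paper's actual proof). You instead reduce $x^p$ to $x$ termwise by Fermat's little theorem, which is more elementary and self-contained --- no Stirling congruence needs to be quoted --- but it obliges you to do two extra things, both of which you handle correctly: establish the integrality of $N_k$ so that cancelling the unit $k!$ modulo $p$ for $k<p$ is legitimate (you get this from the same Stirling identity the paper uses, though the cruder observation that $\Dop^k$ of an integer-valued function is $k!$ times an integer combination of binomial-basis coefficients would also do), and treat $k=p$ by the exact evaluation $\Dop^p x^p=p!$, where the termwise Fermat reduction gives no information because $p\mid p!$. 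In short, the paper trades a citation for brevity; you trade two case distinctions for elementarity. Either is acceptable.
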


\begin{proof}
By Lemma~\ref{lem:diff-xn} and \eqref{eq:sf-def} we have
\[
  \tilde{d}_{p,k} := \frac{1}{k!} \Dop^k x^p \valueat{x = 1}
    = \frac{1}{k!} \left( \SF{p}{k} + \SF{p}{k+1} \right)
    = \ST_2(p,k) + (k+1) \ST_2(p,k+1).
\]
It is well known (\cite[6.51, p.~314]{Graham&others:1994}) that
\[
  \ST_2(p,k) \equiv 0 \pmod{p} \quad (1 < k < p).
\]
Thus $\tilde{d}_{p,k} \equiv d_{p,k} \pmod{p}$ follows easily.
\end{proof}

\begin{lemma}[Leibniz rule] \label{lem:leibniz}
For integers $n \geq 0$ and any functions $f, g: \CC \to \CC$, we have
\[
  \Dop^n (f(s)g(s)) = \sum_{\nu=0}^n \binom{n}{\nu}
    (\Dop^\nu f(s))(\Dop^{n-\nu} g(s+\nu)).
\]
\end{lemma}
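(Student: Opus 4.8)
The plan is to prove the identity by induction on $n$, the engine being a one-step product rule for $\Dop$. The base case $n=0$ is immediate, since both sides equal $f(s)g(s)$, and the case $n=1$ unwinds directly to the defining telescoping $f(s+1)g(s+1)-f(s)g(s)$.

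First I would record the single-step Leibniz rule
\[
  \Dop\bigl(u(s)v(s)\bigr) = u(s)\,\Dop v(s) + \bigl(\Dop u(s)\bigr)\,v(s+1),
\]
which follows at once by writing out $u(s+1)v(s+1)-u(s)v(s)$ and inserting and cancelling the cross term $u(s)v(s+1)$. There is a symmetric variant with the unit shift on the other factor; this particular form is the one that produces the shifted argument $g(s+\nu)$ cleanly in the recursion.

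For the inductive step, I assume the formula at level $n$ and apply one further $\Dop$ in the variable $s$ to each summand $(\Dop^\nu f(s))\,(\Dop^{n-\nu}g(s+\nu))$. Two facts are used: $\Dop(\Dop^\nu f(s))=\Dop^{\nu+1}f(s)$; and, because the forward difference commutes with the integer shift $s\mapsto s+\nu$, applying $\Dop$ in $s$ to $(\Dop^{n-\nu}g)(s+\nu)$ merely raises the difference order, giving $(\Dop^{n+1-\nu}g)(s+\nu)$, while the unit shift $v(s+1)$ from the product rule turns the argument into $s+\nu+1$. The one-step rule thus splits each summand into two, and after reindexing $\nu\mapsto\nu-1$ in the second family of terms the two binomial weights combine by Pascal's rule $\binom{n}{\nu}+\binom{n}{\nu-1}=\binom{n+1}{\nu}$, yielding exactly the asserted formula at level $n+1$.

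The point requiring care is the bookkeeping of the shifted arguments: one must check that the extra unit shift coming from the product rule lands on $g$ rather than on $f$, and that the reindexing aligns the two sums so Pascal's rule applies termwise, including at the boundary indices $\nu=0$ and $\nu=n+1$ (where the missing binomial coefficients vanish). As an alternative, one could give a purely operator-theoretic proof avoiding induction: setting $F(s,t)=f(s)g(t)$, the diagonal difference corresponds to $E_1E_2-I=\Dop_2+\Dop_1 E_2$, expressed through the two commuting pairs of shift and difference operators $(\Dop_1,E_1)$ acting on $s$ and $(\Dop_2,E_2)$ acting on $t$; expanding the $n$-th power by the binomial theorem and restricting to the diagonal $t=s$ reproduces the stated sum directly.
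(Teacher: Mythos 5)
Your proof is correct. Note that the paper itself offers no proof of this lemma at all --- it is stated as a known fact about finite differences (standard in references such as N{\o}rlund or Graham--Knuth--Patashnik) --- so there is no argument of the paper's to compare against; your write-up simply supplies the details the paper omits. The inductive engine is sound: the one-step rule $\Dop(uv)(s) = u(s)\,\Dop v(s) + (\Dop u(s))\,v(s+1)$ is verified by inserting the cross term $u(s)v(s+1)$, and in the inductive step each summand $(\Dop^\nu f(s))(\Dop^{n-\nu}g(s+\nu))$ splits so that both resulting families have the invariant form ``argument of $g$ equals $s$ plus the difference order of $f$,'' which is exactly what makes the reindexing $\nu \mapsto \nu-1$ align termwise and Pascal's rule close the induction, with the boundary terms $\nu=0$ and $\nu=n+1$ picking up coefficient $1$ as they should. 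Your alternative operator argument --- writing $E_1E_2 - I = \Dop_2 + \Dop_1 E_2$ for commuting shift/difference operators in two variables, expanding by the binomial theorem, and restricting to the diagonal --- is also valid and arguably more illuminating, since it explains \emph{why} the asymmetric shift $g(s+\nu)$ appears: it is the $E_2^\nu$ factor forced by factoring the diagonal difference. Either route is a complete proof.
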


\begin{lemma} \label{lem:diff-xr-log}
Let $k, n, r \geq 1$ be integers and $f: \NN \to \CC$ be a function. We have
\[
  \Dop^n x^r f(x) \valueat{x = 1} = \sum_{\nu=0}^{\min(n,r)}
    \lambda_{r,\nu}(n) \Dop^{n-\nu} f(x) \valueat{x = 1}
\]
with
\[
  \lambda_{r,\nu}(n) = \sum_{k=\nu}^r \SF{r}{k} \binom{n}{\nu}
    \binom{n+1-\nu}{k-\nu}
\]
where $\lambda_{r,\nu} \in \ZZ[x]$ and $\deg \lambda_{r,\nu} = r$.
\end{lemma}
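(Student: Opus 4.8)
The plan is to expand the product $x^r f(x)$ by the Leibniz rule of Lemma~\ref{lem:leibniz}, reduce the factor $x^r$ through the finite differences evaluated in Lemma~\ref{lem:diff-xn}, eliminate the argument shifts that Leibniz introduces on $f$, and then collect terms to recover the stated coefficients $\lambda_{r,\nu}$. Concretely, I would apply Lemma~\ref{lem:leibniz} with the first factor equal to $x^r$ and the second equal to $f$, so that the shift is thrown onto $f$:
\[
  \Dop^n (x^r f(x)) = \sum_{\mu=0}^n \binom{n}{\mu}
    (\Dop^\mu x^r)(\Dop^{n-\mu} f(x+\mu)).
\]
Evaluating at $x=1$ and invoking Lemma~\ref{lem:diff-xn} yields $\Dop^\mu x^r\valueat{x=1} = \SF{r}{\mu}+\SF{r}{\mu+1}$; since $\SF{r}{k}=0$ for $k>r$, the effective range is $0 \le \mu \le \min(n,r)$.

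The next step is to remove the shift on $f$. Writing $E$ for the shift operator $Ef(x)=f(x+1)$ and using $E=I+\Dop$ together with the commutativity of $E$ and $\Dop$, Newton's expansion gives $f(x+\mu)=(I+\Dop)^\mu f(x)=\sum_{j=0}^\mu \binom{\mu}{j}\Dop^j f(x)$, whence
\[
  \Dop^{n-\mu} f(x+\mu)\valueat{x=1}
    = \sum_{j=0}^\mu \binom{\mu}{j}\,\Dop^{n-\mu+j} f(x)\valueat{x=1}.
\]
Substituting this back and setting $\nu = \mu - j$, so that the operator order $n-\mu+j$ becomes $n-\nu$ and $\binom{\mu}{j}=\binom{\mu}{\nu}$, the coefficient of $\Dop^{n-\nu} f(x)\valueat{x=1}$ reads
\[
  \lambda_{r,\nu}(n) = \sum_{\mu=\nu}^{\min(n,r)} \binom{n}{\mu}\binom{\mu}{\nu}
    \left( \SF{r}{\mu}+\SF{r}{\mu+1}\right),
\]
with $\nu$ running from $0$ to $\min(n,r)$.

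The rest is a purely combinatorial simplification. I would use the subset-of-a-subset identity $\binom{n}{\mu}\binom{\mu}{\nu}=\binom{n}{\nu}\binom{n-\nu}{\mu-\nu}$, split the two Stirling terms, and reindex the second one by $k=\mu+1$ so that both become sums over a single index $k$ weighted by $\SF{r}{k}$; since the binomials $\binom{n-\nu}{k-\nu}$ and $\binom{n-\nu}{k-\nu-1}$ vanish once $k$ exceeds the admissible range, the upper limit may be taken to be $r$ in both. Merging the two binomials in $n-\nu$ by Pascal's rule $\binom{n-\nu}{k-\nu}+\binom{n-\nu}{k-\nu-1}=\binom{n+1-\nu}{k-\nu}$ then produces the claimed closed form
\[
  \lambda_{r,\nu}(n)=\binom{n}{\nu}\sum_{k=\nu}^r \SF{r}{k}\,\binom{n+1-\nu}{k-\nu}.
\]

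For the structural assertions, I would rewrite $\SF{r}{k}=k!\,\ST_2(r,k)$, which turns the $k$-th summand into $\binom{k}{\nu}\,\ST_2(r,k)\,(n)_\nu\,(n+1-\nu)_{k-\nu}$, an integer multiple of a monic integer polynomial in $n$ of degree $k$; summing over $k$ gives $\lambda_{r,\nu}\in\ZZ[x]$, and the top term $k=r$ (where $\ST_2(r,r)=1$ and the leading coefficient is $\binom{r}{\nu}>0$) forces $\deg\lambda_{r,\nu}=r$. The only delicate point is the last paragraph: keeping the summation ranges and the vanishing of out-of-range binomials consistent through the reindexing and Pascal collapse. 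Everything else is a direct application of Lemmas~\ref{lem:leibniz} and~\ref{lem:diff-xn}.
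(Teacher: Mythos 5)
Your proof is correct, but it distributes the Leibniz shift differently from the paper and therefore needs an extra step that the paper avoids. The paper applies the symmetric form of Lemma~\ref{lem:leibniz} so that the argument shift lands on the polynomial factor, giving $\Dop^n x^r f(x)\valueat{x=1}=\sum_{\nu}\binom{n}{\nu}\Dop^\nu(x+n-\nu)^r\valueat{x=1}\,\Dop^{n-\nu}f(x)\valueat{x=1}$; the closed form for $\lambda_{r,\nu}(n)$ then drops out in one line from \eqref{eq:sf-binom}, since $\Dop^\nu\binom{x}{k}=\binom{x}{k-\nu}$, with no reindexing and no appeal to Lemma~\ref{lem:diff-xn}. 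You instead take the rule literally as stated, so the shift falls on $f$, and you must undo it with the Newton expansion $f(x+\mu)=(I+\Dop)^\mu f(x)$ and then collapse the resulting double sum via $\binom{n}{\mu}\binom{\mu}{\nu}=\binom{n}{\nu}\binom{n-\nu}{\mu-\nu}$ and Pascal's rule. All of these steps check out --- including the extension of the summation limits past vanishing binomials, the shift of index $k=\mu+1$, and the identification of the degree-$r$ leading term with coefficient $\binom{r}{\nu}$ after rewriting in the form \eqref{eq:lambda-s2} --- so you land on the same formula. The paper's arrangement buys a shorter computation; yours is self-contained relative to the exact statement of Lemma~\ref{lem:leibniz}, at the cost of the additional combinatorial cleanup.
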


\begin{proof}
We use Lemma~\ref{lem:leibniz} to obtain that
\[
  \Dop^n x^r f(x) \valueat{x = 1}
    = \sum_{\nu=0}^{\min(n,r)} \binom{n}{\nu} \Dop^\nu (x+n-\nu)^r
      \valueat{x = 1} \Dop^{n-\nu} f(x) \valueat{x = 1}.
\]
Applying $\Dop^\nu$ to \eqref{eq:sf-binom} provides that
\begin{align}
  \lambda_{r,\nu}(n)
    &= \binom{n}{\nu} \Dop^\nu (x+n-\nu)^r \valueat{x = 1} \nonumber \\
    &= \sum_{k=\nu}^r \SF{r}{k} \binom{n}{\nu}
       \binom{n+1-\nu}{k-\nu} \label{eq:lambda-sf} \\
    &= \sum_{k=\nu}^r \binom{k}{\nu} \ST_2(r,k) (n)_{\nu} (n+1-\nu)_{k-\nu}.
    \label{eq:lambda-s2}
\end{align}
The last equation follows by \eqref{eq:sf-def} and implies that
$\lambda_{r,\nu} \in \ZZ[x]$ and $\deg \lambda_{r,\nu} = r$.
\end{proof}

\begin{prop}[{\cite[Theorem~1.3]{Kellner:2012}}] \label{prop:fh-ident}
Define
\[
  \fs_n(x) = \sum_{\nu=1}^n \SF{n}{\nu} x^\nu, \quad
    \fh_n(x) = \sum_{\nu=1}^n \SF{n}{\nu} \HN_\nu \, x^\nu \quad (n \geq 1).
\]
For $n \in 2\NN$ we have the identity
\[
  \fh_n \! \left( \! -\frac{1}{2} \right) =
    - \frac{n-1}{2} \, \fs_{n-1} \! \left( \! -\frac{1}{2} \right).
\]
\end{prop}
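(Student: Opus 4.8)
The plan is to pass to exponential generating functions in a variable $t$ and to reduce the claimed identity to a single elementary functional equation. I would start from the standard fact
\[
  \sum_{n\geq 0}\SF{n}{k}\frac{t^n}{n!} = (e^t-1)^k,
\]
which is immediate from $\SF{n}{k}=\Dop^k x^n\valueat{x=0}$ and the definition of $\Dop$. Summing these over $k$ with weight $x^k$ yields the generating function of the $\fs_n$, while weighting each power instead by $\HN_k$ and using $\sum_{k\geq 1}\HN_k w^k = -\log(1-w)/(1-w)$ (equivalently $\HN_\nu=\int_0^1(1-u^\nu)(1-u)^{-1}\,du$) yields the generating function of the $\fh_n$. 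Writing $w=x(e^t-1)$, this gives
\[
  \sum_{n\geq 0}\fs_n(x)\frac{t^n}{n!} = \frac{1}{1-w}, \qquad
  \sum_{n\geq 1}\fh_n(x)\frac{t^n}{n!} = \frac{-\log(1-w)}{1-w},
\]
valid for $t$ near $0$, the only discrepancy being the constant term of the first series (where $\fs_0=0$ but $1/(1-w)$ contributes $1$).

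Next I would specialize to $x=-\tfrac12$, where $1-w=\tfrac12(e^t+1)$. Using $2/(e^t+1)=1-\tanh(t/2)$ and $\log\tfrac12(e^t+1)=\tfrac{t}{2}+\log\cosh(t/2)$, the two series collapse to
\[
  A(t):=\sum_{n\geq 1}\fh_n\!\Big(\!-\tfrac12\Big)\frac{t^n}{n!}
    = -\Big(1-\tanh\tfrac{t}{2}\Big)\Big(\tfrac{t}{2}+\log\cosh\tfrac{t}{2}\Big),
\]
\[
  B(t):=\sum_{m\geq 1}\fs_m\!\Big(\!-\tfrac12\Big)\frac{t^m}{m!} = -\tanh\tfrac{t}{2}.
\]
The right-hand side of the proposition corresponds, under these generating functions, to the operation $c_{n-1}\mapsto-\tfrac{n-1}{2}\,c_{n-1}$ placed in degree $n$: differentiation accounts for the factor $n-1$ and integration for the index shift, so it sends $B$ to
\[
  D(t) = -\tfrac12\int_0^t \tau\,B'(\tau)\,d\tau.
\]
Since $B$ is an odd function of $t$, the function $D$ is \emph{even}, and this is exactly the mechanism that restricts the identity to even $n$.

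It then remains to prove the single analytic identity $A(t)+A(-t)=2D(t)$, which says precisely that the even-degree coefficients of $A$ and $D$ agree. Expanding the product defining $A$ and collecting the even part gives $A(t)+A(-t)=t\tanh(t/2)-2\log\cosh(t/2)$, while an elementary integration by parts (using $\tfrac{d}{d\tau}\tanh(\tau/2)=\tfrac12\cosh^{-2}(\tau/2)$ and $\int_0^t\tanh(\tau/2)\,d\tau=2\log\cosh(t/2)$) evaluates $2D(t)$ to the same expression. Comparing coefficients of $t^n/n!$ for even $n$ then yields $\fh_n(-\tfrac12)=-\tfrac{n-1}{2}\,\fs_{n-1}(-\tfrac12)$. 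I expect the main obstacle to be conceptual rather than computational: one must obtain the closed form of the $\fh_n$ generating function through the harmonic weighting, and then carry out the parity bookkeeping correctly, checking that $D$ carries no odd part so that the coefficient comparison is legitimate exactly in even degree, and tracking the constant-term correction in the $\fs_n$ series. The evaluation of $D(t)$ itself is routine.
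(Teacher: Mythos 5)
Your argument is correct, and it is worth noting that the paper itself offers no proof of this proposition at all: it is imported wholesale from the external reference \cite{Kellner:2012}, so your derivation is a genuine, self-contained substitute rather than a variant of an in-paper argument. Your route is the natural one: from $\SF{n}{k}=\Dop^k x^n\valueat{x=0}$ one gets $\sum_{n\geq 0}\SF{n}{k}t^n/n!=(e^t-1)^k$, the harmonic weighting $\sum_{k\geq 1}\HN_k w^k=-\log(1-w)/(1-w)$ is standard, and at $x=-\tfrac12$ one has $1-w=\tfrac12(e^t+1)$, giving $B(t)=-\tanh(t/2)$ (after the constant-term correction, which you track) and $A(t)=-(1-\tanh\tfrac{t}{2})(\tfrac{t}{2}+\log\cosh\tfrac{t}{2})$. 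Your encoding of the right-hand side as $D(t)=-\tfrac12\int_0^t\tau B'(\tau)\,d\tau=\sum_{n\geq 2}\bigl(-\tfrac{n-1}{2}\fs_{n-1}(-\tfrac12)\bigr)t^n/n!$ is exactly right, the parity argument (oddness of $\tanh(t/2)$ forces $D$ even, which is why the identity is restricted to even $n$) is sound, and both closed forms $A(t)+A(-t)$ and $2D(t)$ do evaluate to $t\tanh(t/2)-2\log\cosh(t/2)$; I verified the coefficient identity numerically for $n=2$ and $n=4$. What your approach buys, beyond making the paper self-contained at this point, is a conceptual explanation of the evenness hypothesis and a bonus closed form for the generating functions $\sum\fh_n(-\tfrac12)t^n/n!$ and $\sum\fs_n(-\tfrac12)t^n/n!$ in terms of $\tanh$ and $\log\cosh$, which meshes nicely with the appearance of $\tanh^{(n)}(0)$ in Proposition~\ref{prop:p-poly}. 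The only presentational caution is to state explicitly that all manipulations are identities of formal power series (or hold for $|t|$ small), so that the coefficient comparison in even degree is legitimate.
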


\begin{prop}[Kummer congruences {\cite[Theorem~7 (2), p.~44]{Koblitz:1996}}]
\label{prop:kummer-congr}
Let $p > 3$ be a prime and $n, m \in 2\NN$. 
If $n \equiv m \not\equiv 0 \pmod{p-1}$, then
\[
  \frac{B_n}{n} \equiv \frac{B_m}{m} \pmod{p}.
\]
\end{prop}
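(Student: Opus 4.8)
The plan is to prove the equivalent ``$p$-stabilized'' congruence
\[
  (1-p^{n-1})\frac{B_n}{n} \equiv (1-p^{m-1})\frac{B_m}{m} \pmod{p},
\]
and then to deduce the stated one for free: since $n,m \in 2\NN$ we have $n-1, m-1 \geq 1$, so $p^{n-1} \equiv p^{m-1} \equiv 0 \pmod{p}$ and both Euler factors reduce to $1 \pmod p$. For this reduction to be meaningful I must first know $p$-integrality. Von Staudt--Clausen gives $\ord_p(B_n) \geq 0$ precisely because $(p-1) \nmid n$ (this is the hypothesis $n \not\equiv 0 \pmod{p-1}$); the delicate point is the division by $n$, i.e.\ that $\ord_p(B_n) \geq \ord_p(n)$, which only bites when $p \pdiv n$ and which I expect to obtain as a by-product of the construction below rather than as a separate input.

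The heart of the argument is to exhibit the stabilized value as a single moment of a fixed bounded $\ZZ_p$-valued measure on the units $\ZZ_p^\times$. Starting from the classical power-sum identity $\sum_{a=0}^{N-1} a^{n} = (B_{n+1}(N) - B_{n+1})/(n+1)$ with $N = p^{v}$ and then discarding the multiples of $p$, one obtains the limit formula
\[
  \lim_{v\to\infty}\frac{1}{p^{v}}\sum_{\substack{a=1\\ p\,\nmid\, a}}^{p^{v}-1} a^{n} = (1-p^{n-1})B_n .
\]
To build in the factor $1/n$ --- which is exactly what makes the \emph{quotient}, rather than $B_n$ itself, behave $p$-adically --- I pass to exponent $n-1$ and integrate against the Bernoulli (Kubota--Leopoldt) measure $\mu$, so that the relevant object becomes $\int_{\ZZ_p^\times} x^{\,n-1}\,d\mu(x) = (1-p^{n-1})B_n/n$. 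The measure is bounded, hence the moment is $p$-integral, after the standard regularization by an auxiliary unit $c$ with $c \not\equiv 1 \pmod{p^2}$; this same device supplies the missing $p \pdiv n$ integrality flagged above.

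Finally, decompose each unit as $x = \omega(x)\langle x\rangle$, where $\omega(x)^{p-1}=1$ is the Teichm\"uller representative and $\langle x\rangle \equiv 1 \pmod p$. Then $x^{\,n-1} \equiv \omega(x)^{\,n-1} \pmod p$, and $\omega(x)^{\,n-1}$ depends on $n$ only through $n \bmod (p-1)$; integrating, the moment modulo $p$ depends on $n$ only through its residue modulo $p-1$. Since $n \equiv m \pmod{p-1}$, the two stabilized values are congruent modulo $p$, and the first-paragraph reduction completes the proof.

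The main obstacle is the middle step: producing the interpolating measure with the correct normalization. The whole subtlety lives in the factor $1/n$. Modulo $p$ the bare power sum $\sum_a a^{n}$ sees only $n \bmod (p-1)$, but far too coarsely --- it pins down $B_n$ only modulo $p$, and $B_n \bmod p$ is genuinely \emph{not} periodic in $n \bmod (p-1)$. One must therefore work one $p$-adic digit deeper, which is exactly what the shift to exponent $n-1$ together with the $\langle x\rangle$-part of the Teichm\"uller splitting accomplishes. Carrying out the $c$-regularization and the $p \pdiv n$ integrality so that the moment formula holds on the nose, rather than up to an unwanted power of $p$, is where the real care is required.
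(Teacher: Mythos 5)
The paper itself offers no proof of this proposition: it is imported verbatim from the cited reference (Koblitz, Theorem~7~(2), p.~44), and the proof given there is exactly the measure-theoretic argument you outline (regularized Bernoulli/Mazur measure, Teichm\"uller splitting, stabilized values). So your route coincides with that of the paper's source; the only question is whether your sketch would carry through, and as written it would not, because the regularization step is set up with the wrong condition.

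The unregularized Bernoulli distribution is unbounded, so no bounded measure has moments $(1-p^{n-1})B_n/n$ ``on the nose''; indeed, for $(p-1) \pdiv n$ these numbers are not even $p$-integral (von Staudt--Clausen gives $\ord_p(B_n)=-1$ there), and along $n=(p-1)p^k$ they are $p$-adically unbounded, so such a measure cannot exist even on a single branch without further input. What the $c$-regularization actually produces is $\int_{\ZZ_p^\times} x^{n-1}\,d\mu_{1,c} = (1-c^{-n})(1-p^{n-1})B_n/n$ (up to sign conventions). Your Teichm\"uller step then yields $(1-c^{-n})(1-p^{n-1})B_n/n \equiv (1-c^{-m})(1-p^{m-1})B_m/m \pmod{p}$, and to conclude you must divide by $1-c^{-n}$; this requires $c^{n}\not\equiv 1 \pmod{p}$, which is precisely where the hypothesis $n \equiv m \not\equiv 0 \pmod{p-1}$ does its real work: one chooses $c$ a primitive root modulo $p$, so that $c^{-n} \equiv c^{-m} \not\equiv 1 \pmod{p}$ and the factor is a unit common to both sides. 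Your stated condition $c \not\equiv 1 \pmod{p^2}$ is the wrong one --- it is the condition that $\langle c\rangle$ generate $1+p\ZZ_p$, which is what one needs to construct the $p$-adic $L$-function, not what is needed here. It permits $c = 1+p$, for which $1-c^{-n} \equiv 0 \pmod{p}$, and then the displayed congruence degenerates to $0 \equiv 0$ and proves nothing. This misplacement also shows up in the fact that your sketch invokes $n \not\equiv 0 \pmod{p-1}$ only for von Staudt--Clausen integrality; a correct write-up must use it inside the measure argument, exactly at the division by $1-c^{-n}$. With $c$ a primitive root and the unit factor divided out, your argument (including the by-product $p$-integrality of $B_n/n$) becomes the proof in the cited source.
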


\section{Proofs}

\begin{proof}[Proof of Theorem \ref{thm:l-func}]
Let $n \geq 1$. We use the identity, cf.~\cite[p.~54]{Norlund:1924},
\[
  \Dop^n \log x \valueat{x = a} = (-1)^{n-1} (n-1)! \int_a^{a+1}
    \frac{dt}{t(t+1)\cdots (t+n-1)}.
\]
Hence
\[
  l_n = (-1)^{n+1} \Dop^n \log x \valueat{x = 1}
    = \frac{1}{n} \int_0^1 \frac{dt}{(t+1)(\tfrac{t}{2}+1)
      \cdots(\tfrac{t}{n}+1)}.
\]
Define $\phi_n(t) = 1 + t/n$ where $\phi_n$ maps $(0,1]$ onto $(1,1+1/n]$.
The integrand above differs by the factor $\phi_{n+1}^{-1}$ regarding $n$
and $n+1$. Estimating these integrals we then obtain that
\[
  n l_n > (n+1) l_{n+1} > n l_{n+1}.
\]
Consequently $(l_n)_{n \geq 1}$ defines a positive strictly decreasing sequence
\[
  \log 2 = l_1 > l_2 > l_3 > \ldots
\]
with limit $l_\infty = 0$. Thus, the coefficients of $\LF(s)$ obey that 
$a_n = o(2^{-n})$. As usual, write $s=\sigma +it$. The estimate
\[
  |\LF(s)| \leq \sum_{n \geq 1} \left| a_n n^{-s} \right|
    < \sum_{n \geq 1} 2^{-n} n^{-\sigma} < \infty
\]
for any $\sigma, t \in \RR$ shows that $\LF(s)$ is an entire function on $\CC$.
Since $\LFS$ is linear and $\LFS(x^m)=\LF(-m)$ for $m \in \NN_0$, it follows
that $\LFS$ is absolutely convergent for any $f \in \CC[x]$. The well-known
values $\zeta(0) = -\frac{1}{2}$ and $\zeta'(0) = -\frac{1}{2} \log ( 2\pi)$
yield the special value $\LFS(1)=\LF(0)=\HF(0)=\log(\pi/2)$ concerning
\eqref{eq:h-def} and \eqref{eq:h-zeta}.
\end{proof}

\begin{prop} \label{prop:eq-h-l}
We have the following relations for $r \geq 1$:
\[
  \HF(-r) = 2^{-r} \LFS( \pp_r )
\]
where
\begin{equation} \label{eq:pp-def}
  \pp_r(x) = \sum_{\nu=0}^r (-1)^\nu 2^{r-\nu} \lambda_{r,\nu}(x + \nu)
\end{equation}
with $\lambda_{r,\nu}$ as defined in Lemma~\ref{lem:diff-xr-log}.
The polynomial $\pp_r \in \ZZ[x]$ is monic of degree $r$.
\end{prop}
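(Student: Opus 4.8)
The plan is to start from the series definition \eqref{eq:h-def} of $\HF$, specialize it at $s=-r$ so that $x^{-s}=x^r$, and reduce the ``logarithmic'' differences $\Dop^n x^r\log x\valueat{x=1}$ to the pure differences $\Dop^m\log x\valueat{x=1}$ that define the coefficients $a_m$ of $\LF$ in Theorem~\ref{thm:l-func}. The bridge for this reduction is exactly Lemma~\ref{lem:diff-xr-log} applied with $f=\log$, which yields
\[
  \Dop^n x^r\log x\valueat{x=1}=\sum_{\nu=0}^{\min(n,r)}\lambda_{r,\nu}(n)\,\Dop^{n-\nu}\log x\valueat{x=1}.
\]
Note that the boundary term $\nu=n$ (possible only when $n\le r$) contributes $\Dop^0\log x\valueat{x=1}=\log 1=0$, so I may freely regard the inner index $m=n-\nu$ as running over $m\ge 1$.

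Next I would substitute $\Dop^m\log x\valueat{x=1}=(-1)^{m+1}2^m a_m$, which is merely the definition of $a_m$ read backwards, and collect the sign and power-of-two factors. A short computation collapses $\tfrac{(-1)^{n+1}}{2^n}(-1)^{n-\nu+1}2^{n-\nu}$ to $(-1)^\nu 2^{-\nu}$, leaving the double sum
\[
  \HF(-r)=\sum_{n\ge1}\sum_{\nu=0}^{\min(n,r)}(-1)^\nu 2^{-\nu}\lambda_{r,\nu}(n)\,a_{n-\nu}.
\]
The decisive step is to exchange the order of summation and reindex by $m=n-\nu$. For each fixed $\nu\in\{0,\dots,r\}$ the inner sum becomes $\sum_{m\ge1}\lambda_{r,\nu}(m+\nu)a_m=\LFS\big(\lambda_{r,\nu}(\,\cdot\,+\nu)\big)$, by the very definition of the functional $\LFS$. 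Linearity of $\LFS$ then gives $\HF(-r)=\LFS\big(\sum_{\nu=0}^r(-1)^\nu 2^{-\nu}\lambda_{r,\nu}(x+\nu)\big)$, and comparison with \eqref{eq:pp-def} shows this inner polynomial equals $2^{-r}\pp_r$, so that $\HF(-r)=2^{-r}\LFS(\pp_r)$ as claimed.

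The step I expect to require the most care is the rearrangement: I must justify interchanging an infinite sum over $n$ with a finite sum over $\nu$. This is where the decay established in Theorem~\ref{thm:l-func} is essential, since each $\lambda_{r,\nu}(n)$ grows only polynomially in $n$ while $a_{n-\nu}=o(2^{-(n-\nu)})$ decays geometrically, so the double series converges absolutely and rearrangement is legitimate; the absolute convergence of $\LFS$ on $\CC[x]$ from the same theorem is precisely the clean packaging of this fact.

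Finally, for the structural claim that $\pp_r\in\ZZ[x]$ is monic of degree $r$: integrality and $\deg\le r$ are immediate, because Lemma~\ref{lem:diff-xr-log} gives $\lambda_{r,\nu}\in\ZZ[x]$ with $\deg\lambda_{r,\nu}=r$ and the prefactors $2^{r-\nu}$ are integers. To pin down the leading coefficient I would read off from \eqref{eq:lambda-sf} that only the $k=r$ summand reaches degree $r$, contributing $\SF{r}{r}/(\nu!\,(r-\nu)!)=\binom{r}{\nu}$ since $\SF{r}{r}=r!\,\ST_2(r,r)=r!$. The shift $x\mapsto x+\nu$ leaves this leading coefficient unchanged, so the coefficient of $x^r$ in $\pp_r$ is $\sum_{\nu=0}^r(-1)^\nu 2^{r-\nu}\binom{r}{\nu}=(2-1)^r=1$ by the binomial theorem, establishing that $\pp_r$ is monic of degree $r$.
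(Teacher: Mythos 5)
Your argument is correct and is essentially the paper's own proof run in reverse: you expand $\HF(-r)$ via Lemma~\ref{lem:diff-xr-log} and reassemble the double sum into $2^{-r}\LFS(\pp_r)$, whereas the paper starts from $2^{-r}\LFS(\pp_r)$ and collapses it to $\HF(-r)$, with the same appeal to absolute convergence for the rearrangement and the same binomial-theorem computation $\sum_{\nu=0}^r(-1)^\nu 2^{r-\nu}\binom{r}{\nu}=1$ for the leading coefficient (you read it off from \eqref{eq:lambda-sf}, the paper from \eqref{eq:lambda-s2}; both give $\binom{r}{\nu}$).
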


\begin{proof}
Let $r \geq 1$ be a fixed integer. By Lemma~\ref{lem:diff-xr-log} we know that
$\pp_r \in \ZZ[x]$ and $\deg \pp_r \leq r$, since $\lambda_{r,\nu} \in \ZZ[x]$
and $\deg \lambda_{r,\nu}=r$ for $\nu=0,\ldots,r$.
Due to absolute convergence ensured by Theorem~\ref{thm:l-func}, we derive that
\begin{align*}
  2^{-r} \LFS(\pp_r)
    &= \sum_{\nu=0}^r \sum_{n \geq 1} \frac{(-1)^{n+\nu+1}}{2^{n+\nu}}
       \lambda_{r,\nu}(n + \nu) \Dop^n \log x \valueat{x = 1} \\
    &= \sum_{\nu=0}^r \sum_{n > \nu} \frac{(-1)^{n+1}}{2^n}
       \lambda_{r,\nu}(n) \Dop^{n-\nu} \log x \valueat{x = 1} \\
    &= \sum_{n \geq 1} \frac{(-1)^{n+1}}{2^n} \sum_{\nu=0}^{\min(n,r)}
       \lambda_{r,\nu}(n) \Dop^{n-\nu} \log x \valueat{x = 1} \\
    &= \sum_{n \geq 1} \frac{(-1)^{n+1}}{2^n} \Dop^n x^r \log x \valueat{x=1} \\
    &= \HF(-r).
\end{align*}
The last two steps follow by Lemma~\ref{lem:diff-xr-log} and \eqref{eq:h-def}.
Define the linear functional
\[
  [ \,\cdot\, ]_n : \RR[x] \to \RR, \quad f \mapsto \frac{f^{(n)}(0)}{n!},
\]
giving the $n$th coefficient of a polynomial $f$. By virtue of
\eqref{eq:lambda-s2} and \eqref{eq:pp-def} we easily obtain
\begin{align*}
  \coeff{\pp_r(x)}_r
    &= \coeff{\sum_{\nu=0}^r (-1)^\nu 2^{r-\nu} \binom{r}{\nu}
       \ST_2(r,r) (x+\nu)_{r} (x+1)_0}_r \\
    &= \sum_{\nu=0}^r \binom{r}{\nu}  (-1)^\nu 2^{r-\nu} = (2-1)^r = 1,
\end{align*}
since all other terms vanish. This shows that $\pp_r$ is a monic polynomial
of degree $r$.
\end{proof}

\begin{prop} \label{prop:p-poly}
The polynomials $\pp_n$ have the following properties for $n \geq 1$:
\begin{align*}
  \pp_n(x) &= \sum_{\nu=0}^n (-1)^\nu 2^{n-\nu} \binom{x+\nu}{\nu}
              \Dop^\nu (x+1)^n, \\
  \pp_1(x) &= x + 1, \\
  \pp_2(x) &= x(x+1), \\
  \pp_n(x) &= \begin{cases}
       (x+1)^2 \, \qq_n(x) & \text{if odd} \; n \geq 3, \\
       x(x+1)  \, \qq_n(x) & \text{if even} \; n \geq 4. \\
    \end{cases}
\end{align*}
The polynomials $\qq_n \in \ZZ[x]$ are monic of degree $n-2$. Moreover,
\[
  \pp_n(0) = \tanh^{(n)}(0) = -\pp_{n+1}'(-1) = 2^{n+1} (2^{n+1} - 1)
    \frac{B_{n+1}}{n+1}.
\]
\end{prop}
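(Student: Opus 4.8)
The plan is to treat the four assertions in turn, building everything on the closed form for $\pp_n$. First I would derive that closed form directly from Proposition~\ref{prop:eq-h-l}: there $\pp_r(x)=\sum_{\nu=0}^r(-1)^\nu 2^{r-\nu}\lambda_{r,\nu}(x+\nu)$, and the proof of Lemma~\ref{lem:diff-xr-log} gives $\lambda_{r,\nu}(n)=\binom{n}{\nu}\Dop^\nu(x+n-\nu)^r\valueat{x=1}$. Substituting the polynomial argument $n\mapsto x+\nu$ and using the shift-invariance of $\Dop$ turns $\Dop^\nu(y+x)^r\valueat{y=1}$ into $\Dop^\nu(x+1)^r$, so that $\lambda_{n,\nu}(x+\nu)=\binom{x+\nu}{\nu}\Dop^\nu(x+1)^n$ and the stated formula follows. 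The base cases $\pp_1(x)=x+1$ and $\pp_2(x)=x(x+1)$ are then immediate by evaluating the few surviving terms.

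Next I would pin down the two vanishing conditions that force the factorizations. Evaluating the closed form at $x=-1$, the factor $\binom{x+\nu}{\nu}$ becomes $\binom{\nu-1}{\nu}$, which is $0$ for every $\nu\geq1$ and $1$ for $\nu=0$; the surviving $\nu=0$ term is $2^n(x+1)^n\valueat{x=-1}=0$, so $\pp_n(-1)=0$ and hence $(x+1)\mid\pp_n$ for all $n\geq1$. For the value at $0$, evaluating the closed form at $x=0$ and applying Lemma~\ref{lem:diff-xn} gives $\pp_n(0)=2^n\sum_\nu(-1/2)^\nu(\SF{n}{\nu}+\SF{n}{\nu+1})=-2^n\fs_n(-1/2)$, the index shift turning the second sum into $-2\fs_n(-1/2)$. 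Since $\sum_{n\geq0}\fs_n(x)\,t^n/n!=(1-x(e^t-1))^{-1}$ (from $\sum_n\SF{n}{\nu}t^n/n!=(e^t-1)^\nu$) specialises at $x=-1/2$ to $2/(e^t+1)=1-\tanh(t/2)$, comparing coefficients yields $\fs_n(-1/2)=-2^{-n}\tanh^{(n)}(0)$ and hence $\pp_n(0)=\tanh^{(n)}(0)$. The partial fraction $2/(e^t+1)=2/(e^t-1)-4/(e^{2t}-1)$ then rewrites this in Bernoulli form as $2^{n+1}(2^{n+1}-1)B_{n+1}/(n+1)$. As $\tanh$ is odd, $\pp_n(0)=0$ for even $n$, giving $x\mid\pp_n$ there.

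The remaining identity $\pp_n'(-1)=-\pp_{n-1}(0)$ I would obtain by differentiating the closed form. At $x=-1$ every contribution in which $\binom{x+\nu}{\nu}$ is not differentiated vanishes ($\nu\geq1$), and the $\nu=0$ term vanishes for $n\geq2$; what remains is $\sum_{\nu\geq1}(-1)^\nu2^{n-\nu}\,A_\nu'(-1)\,\SF{n}{\nu}$, where $A_\nu'(-1)=\tfrac{d}{dx}\binom{x+\nu}{\nu}\valueat{x=-1}=1/\nu$ because only the differentiated $(x+1)$ factor survives. Rewriting $\SF{n}{\nu}/\nu=\SF{n-1}{\nu}+\SF{n-1}{\nu-1}$ via the recurrence~\eqref{eq:sf-rec} and resumming both pieces against $\fs_{n-1}(-1/2)$ gives $\pp_n'(-1)=2^{n-1}\fs_{n-1}(-1/2)=-\pp_{n-1}(0)$, which both completes the displayed chain of equalities (its last link, $-\pp_{n+1}'(-1)=\pp_n(0)$, is this identity reindexed) and supplies the second root datum: for odd $n$ one has $\pp_n'(-1)=-\tanh^{(n-1)}(0)=0$ since $n-1$ is even.

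Finally I would assemble the factorizations: for odd $n\geq3$, $\pp_n(-1)=\pp_n'(-1)=0$ gives $(x+1)^2\mid\pp_n$, while for even $n\geq4$, $\pp_n(-1)=\pp_n(0)=0$ gives $x(x+1)\mid\pp_n$. Because $\pp_n$ is monic of degree $n$ over $\ZZ$ and the divisor is monic of degree $2$ over $\ZZ$ in each case, the quotient $\qq_n\in\ZZ[x]$ is automatically monic of degree $n-2$. I expect the genuinely delicate step to be the derivative computation $\pp_n'(-1)$: one must correctly isolate the single surviving term from a product whose factor $\binom{x+\nu}{\nu}$ vanishes to first order at $x=-1$, and then recognise the telescoping effect of the recurrence~\eqref{eq:sf-rec} in the resummation, whereas the generating-function identification of $\pp_n(0)$ is routine once the evaluation is set up.
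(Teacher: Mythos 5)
Your proposal is correct and follows the same overall skeleton as the paper (closed form for $\pp_n$, the root at $x=-1$, the value $\pp_n(0)$, the derivative $\pp_n'(-1)$, and then the parity of the tangent numbers forcing the factorizations), but two of the key computations are carried out by genuinely different means. For $\pp_n(0)$ the paper recognizes $\sum_{\nu}(-1)^\nu 2^{n-\nu}\Dop^\nu x^n\valueat{x=1}$ as $2^{n+1}\eta(-n)$ via the Hasse--Knopp series \eqref{eq:eta-hasse} and then invokes $\zeta(-n)=-B_{n+1}/(n+1)$, whereas you reduce to $-2^n\fs_n(-1/2)$ and identify the exponential generating function $\sum_n\fs_n(-1/2)t^n/n!=2/(e^t+1)=1-\tanh(t/2)$, extracting both the tangent-number and the Bernoulli forms by a partial fraction; your route is more self-contained (no analytic continuation needed) at the cost of an extra generating-function identity, and I verified the bookkeeping $\fs_n(-1/2)=-2^{-n}\tanh^{(n)}(0)=2(1-2^{n+1})B_{n+1}/(n+1)$ is right. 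For $\pp_{n+1}'(-1)=-\pp_n(0)$ the paper divides out the factor $x+1$ and applies L'H\^opital together with Lemma~\ref{lem:diff-xn}, while you differentiate the product directly at $x=-1$, using that $\binom{x+\nu}{\nu}$ vanishes to first order there with $\frac{d}{dx}\binom{x+\nu}{\nu}\valueat{x=-1}=1/\nu$, and then telescope via $\SF{n}{\nu}/\nu=\SF{n-1}{\nu}+\SF{n-1}{\nu-1}$; this checks out, since $2^n\fs_{n-1}(-1/2)-2^{n-1}\fs_{n-1}(-1/2)=2^{n-1}\fs_{n-1}(-1/2)=-\pp_{n-1}(0)$, and you correctly isolate the $\nu=0$ term (which vanishes only for $n\geq 2$, matching the stated range). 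The remaining steps (the closed form from $\lambda_{r,\nu}$, the vanishing $\pp_n(-1)=0$, and the monic integral quotient $\qq_n$) coincide with the paper's argument.
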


\begin{proof}
Let $n \geq 1$. From \eqref{eq:lambda-sf} and \eqref{eq:pp-def} we deduce that
\begin{align}
  \pp_n(x)
    &= \sum_{\nu=0}^n (-1)^\nu 2^{n-\nu}
       \sum_{k=\nu}^n \SF{n}{k} \binom{x+\nu}{\nu} \binom{x+1}{k-\nu}
  \nonumber \\
    &= \sum_{\nu=0}^n (-1)^\nu 2^{n-\nu} \binom{x+\nu}{\nu} \Dop^\nu (x+1)^n,
  \label{eq:loc-pn-def}
\end{align}
where the last part follows by \eqref{eq:sf-binom} and applying $\Dop^\nu$.
Comparing with \eqref{eq:eta-hasse} and using \eqref{eq:zeta-1-n}
we instantly derive that
\begin{equation} \label{eq:loc-pn-0}
  \pp_n(0) = \sum_{\nu=0}^n (-1)^\nu 2^{n-\nu} \Dop^\nu x^n \valueat{x = 1}
    = - c_{n+1} \zeta( -n )
    = c_{n+1} \frac{B_{n+1}}{n+1}
\end{equation}
with an additional factor $c_{n+1} = 2^{n+1} ( 2^{n+1} - 1 )$. This equals the
$n$th tangent number except for the sign such that $\pp_n(0) = \tanh^{(n)}(0)$,
see \cite[p.~287]{Graham&others:1994}. We further obtain that
\begin{equation} \label{eq:loc-pn-1}
  \pp_n(-1) = \sum_{\nu=0}^n (-1)^\nu 2^{n-\nu} \binom{\nu-1}{\nu}
      \Dop^\nu x^n \valueat{x = 0}
    = 2^n \Dop^0 x^n \valueat{x = 0} = 0.
\end{equation}
Next, we show that
\begin{equation} \label{eq:loc-pn-2}
  \pp_{n+1}'(-1) = -\pp_n(0).
\end{equation}
Since $\pp_{n+1}(-1)=0$, we have $x+1$ as a factor of $\pp_{n+1}(x)$. Hence
\[
  \frac{\pp_{n+1}(x)}{x+1} = 2^{n+1} (x+1)^n +
    \sum_{\nu=1}^{n+1} (-1)^\nu 2^{n+1-\nu}
    \frac{1}{\nu} \binom{x+\nu}{\nu-1} \Dop^\nu (x+1)^{n+1}.
\]
By L'H\^{o}pital's rule we obtain that
\[
  \pp_{n+1}'(-1) = \lim_{x \to -1} \frac{\pp_{n+1}(x)}{x+1}
    = -\sum_{\nu=0}^n (-1)^\nu 2^{n-\nu}
    \frac{1}{\nu+1} \Dop^{\nu+1} x^{n+1} \valueat{x = 0}.
\]
In view of Lemma~\ref{lem:diff-xn} and \eqref{eq:loc-pn-0} we then get
\eqref{eq:loc-pn-2}. From \eqref{eq:loc-pn-0}, \eqref{eq:loc-pn-1},
\eqref{eq:loc-pn-2}, and $B_n=0$ for odd $n \geq 3$, we finally conclude that
$x+1 \pdiv \pp_n(x)$ for $n \geq 1$, $x \pdiv \pp_n(x)$ for even $n \geq 2$,
and $(x+1)^2 \pdiv \pp_n(x)$ for odd $n \geq 3$. The rest follows by the fact
that $\pp_n$ is a monic polynomial of degree $n$.
\end{proof}

\begin{prop} \label{prop:p0-deriv}
We have
\[
  \pp_n'(0) = (n-1) \, \pp_{n-1}(0) \quad (n \in 2\NN).
\]
\end{prop}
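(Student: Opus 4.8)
The plan is to differentiate the explicit product formula for $\pp_n$ and split off the piece that reassembles a lower polynomial from a genuinely new harmonic-number sum, the latter being dispatched by the identity of Proposition~\ref{prop:fh-ident}. Concretely, I would start from
\[
  \pp_n(x) = \sum_{\nu=0}^n (-1)^\nu 2^{n-\nu} \binom{x+\nu}{\nu} \Dop^\nu (x+1)^n
\]
and differentiate term by term, using that $\tfrac{d}{dx}$ commutes with $\Dop^\nu$. The contribution from differentiating $(x+1)^n$ produces $n\,\Dop^\nu(x+1)^{n-1}$; since $\Dop^n(x+1)^{n-1}=0$, the $\nu=n$ term drops and the remaining sum telescopes into $2n\,\pp_{n-1}(x)$ as a polynomial identity.

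Evaluating the derivative at $x=0$ and using $\tfrac{d}{dx}\binom{x+\nu}{\nu}\valueat{x=0} = \HN_\nu$ together with $\Dop^\nu(x+1)^n\valueat{x=0} = \SF{n}{\nu} + \SF{n}{\nu+1}$ (Lemma~\ref{lem:diff-xn}), I would obtain
\[
  \pp_n'(0) = 2n\,\pp_{n-1}(0) + A, \qquad
    A = \sum_{\nu=0}^n (-1)^\nu 2^{n-\nu} \HN_\nu \left( \SF{n}{\nu} + \SF{n}{\nu+1} \right),
\]
so that it suffices to prove $A = -(n+1)\,\pp_{n-1}(0)$.

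To evaluate $A$, I would split it along the two Stirling terms. The first piece is immediately $2^n\,\fh_n(-\tfrac12)$. In the second piece I would reindex by $k=\nu+1$, write $\HN_{k-1} = \HN_k - 1/k$, and crucially use the recurrence \eqref{eq:sf-rec} in the form $\SF{n}{k}/k = \SF{n-1}{k} + \SF{n-1}{k-1}$ to eliminate the awkward factor $1/k$. After collecting terms this reduces $A$ to $A = -2^n\,\fh_n(-\tfrac12) + 2^n\,\fs_{n-1}(-\tfrac12)$. Here the hypothesis $n \in 2\NN$ enters: Proposition~\ref{prop:fh-ident} gives $\fh_n(-\tfrac12) = -\tfrac{n-1}{2}\,\fs_{n-1}(-\tfrac12)$, whence $A = 2^{n-1}(n+1)\,\fs_{n-1}(-\tfrac12)$.

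Finally I would identify $\fs_{n-1}(-\tfrac12)$ with $\pp_{n-1}(0)$. Expanding $\pp_{n-1}(0)$ through \eqref{eq:loc-pn-0} and Lemma~\ref{lem:diff-xn} in the same $\SF{n-1}{\cdot}$ terms, the two resulting sums combine to $\pp_{n-1}(0) = -2^{n-1}\,\fs_{n-1}(-\tfrac12)$; substituting this yields $A = -(n+1)\,\pp_{n-1}(0)$ and therefore $\pp_n'(0) = (n-1)\,\pp_{n-1}(0)$. I expect the main obstacle to be the bookkeeping in rewriting the harmonic-number sum in terms of $\fh_n$ and $\fs_{n-1}$ — in particular handling the index shifts and the $\SF{n}{k}/k$ substitution so that Proposition~\ref{prop:fh-ident} applies cleanly; once $A$ is in that form everything downstream is routine.
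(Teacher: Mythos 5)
Your proposal is correct and follows essentially the same route as the paper: differentiate the explicit formula \eqref{eq:loc-pn-def} to split $\pp_n'(0)$ into $2n\,\pp_{n-1}(0)$ plus a harmonic-number sum, reduce that sum via $\HN_\nu = \HN_{\nu+1}-\tfrac{1}{\nu+1}$, the recurrence \eqref{eq:sf-rec}, and the identity $\pp_{n-1}(0) = -2^{n-1}\fs_{n-1}(-\tfrac12)$ from \eqref{eq:loc-pn-0-sf}, and finish with Proposition~\ref{prop:fh-ident}. All intermediate values check out (e.g.\ your $A = -(n+1)\,\pp_{n-1}(0)$ agrees with the paper's $f_n(0) = -h_n - 2\pp_{n-1}(0)$); the only difference is cosmetic bookkeeping in terms of $\fh_n(-\tfrac12)$, $\fs_{n-1}(-\tfrac12)$ rather than the paper's $h_n$, $\pp_{n-1}(0)$.
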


\begin{proof}
We first observe by \eqref{eq:loc-pn-0} and Lemma~\ref{lem:diff-xn} that
\begin{equation} \label{eq:loc-pn-0-sf}
  \pp_n(0) = \sum_{\nu=0}^n (-1)^\nu 2^{n-\nu}
      \left( \SF{n}{\nu} + \SF{n}{\nu+1} \right)
    = - \sum_{\nu=1}^n (-1)^\nu 2^{n-\nu} \SF{n}{\nu}
\end{equation}
for $n \geq 1$. Now, let $n \in 2\NN$. Define
\[
  H_k(x) = \sum_{j=1}^k \frac{1}{x+j} \quad (k \geq 1)
\]
where $\HN_k = H_k(0)$. Via \eqref{eq:loc-pn-def} one easily sees that 
the derivative is given by
\[
  \pp_n'(x) = 2n \, \pp_{n-1}(x) + f_n(x)
\]
with
\[
  f_n(x) = \sum_{\nu=1}^n (-1)^\nu 2^{n-\nu} \binom{x+\nu}{\nu} H_\nu(x)
    \Dop^\nu (x+1)^n.
\]
Define
\[
  h_n = \sum_{\nu=1}^n (-1)^\nu 2^{n-\nu} \HN_\nu \SF{n}{\nu}.
\]
Using Lemma~\ref{lem:diff-xn} and \eqref{eq:loc-pn-0} we then obtain that
\begin{align*}
  f_n(0) &= \sum_{\nu=1}^n (-1)^\nu 2^{n-\nu} \HN_\nu
       \left( \SF{n}{\nu} + \SF{n}{\nu+1} \right) \\
    &= h_n - 2(h_n + \pp_{n-1}(0)) = -h_n - 2\pp_{n-1}(0),
\end{align*}
where the second part follows by the substitution
$\HN_\nu = \HN_{\nu+1} - \frac{1}{\nu+1}$ and \eqref{eq:loc-pn-0-sf}. Thus
\[
  \pp_n'(0) = 2(n-1) \, \pp_{n-1}(0) - h_n.
\]
With the help of Proposition~\ref{prop:fh-ident} we finally achieve that
\[
  h_n = 2^n \fh_n (-1/2) = -2^{n-1} (n-1) \fs_{n-1} (-1/2)
    = (n-1) \, \pp_{n-1}(0).
\]
The last equation follows by \eqref{eq:loc-pn-0-sf} and shows the result.
\end{proof}

\begin{proof}[Proof of Theorem \ref{thm:p-poly}]
The proof jointly follows by Propositions \ref{prop:eq-h-l},
\ref{prop:p-poly}, and \ref{prop:p0-deriv}.
\end{proof}

\begin{proof}[Proof of Theorem \ref{thm:quot-zeta}]
Let $n \in 2\NN$. By \eqref{eq:zeta-quot} and Proposition~\ref{prop:eq-h-l}
we have
\begin{equation} \label{eq:loc-zn-1}
  \zeta( n+1 )/\zeta( n ) = ( 2^{n-1} ( 2^{n+1} - 1 ) B_n )^{-1} \,
    \LFS( \pp_n ).
\end{equation}
From Propositions \ref{prop:p-poly} and \ref{prop:p0-deriv} we obtain that
\begin{equation} \label{eq:loc-zn-2}
  \pp_n'(0) = \frac{n-1}{n} 2^n ( 2^n - 1 ) B_n.
\end{equation}
Plugging \eqref{eq:loc-zn-2} into \eqref{eq:loc-zn-1} yields that
\[
  \frac{\zeta( n+1 )}{\zeta( n )} = \left( 1-\frac{1}{n} \right)
    \left( 1-\frac{1}{2^{n+1}-1} \right) \frac{\LFS( \pp_n )}{\pp_n'(0)}.
\]
Since $\zeta(n) \to 1$, $1 - 1/n \to 1$, and $1 - 1/(2^{n+1}-1) \to 1$ as
$n \to \infty$, we derive that
\[
  \lim_{\substack{n \to \infty\\2 \pdiv n}} \frac{\LFS( \pp_n )}{\pp_n'(0)}
    = 1.
\]
It remains the second part. Since $\LFS$ is linear, we obtain by
\eqref{eq:loc-zn-1} that
\begin{equation} \label{eq:loc-zn-3}
  \alpha \, \zeta( n+1 ) / \zeta( n ) = \LFS( \hat{\pp} )
\end{equation}
where $\alpha \in \QQ^\times$ and $\hat{\pp} \in \QQ[x]$ with $\deg \hat{\pp} = n$. 
Thus, the claimed sum, consisting of terms as in \eqref{eq:loc-zn-3}, follows by 
the assumption that the integers $n_j \in 2\NN$ are strictly increasing.
\end{proof}

\begin{lemma} \label{lem:pn-deriv}
Let $n > k \geq 1$ be integers. The $k$th derivative of $\pp_n$ is given by
\[
  \pp_n^{(k)}(x) = \sum_{j=0}^k \binom{k}{j} (n-1)_j
    \sum_{\nu=0}^{n-1-j} (-1)^\nu 2^{n-1-\nu} (x+\nu+1)_{\nu+1}^{(k-j)}
    \frac{1}{\nu!} \Dop^\nu (x+1)^{n-1-j}.
\]
\end{lemma}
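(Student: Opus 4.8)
The plan is to reduce $\pp_n$ to a representation in which the inner power is $(x+1)^{n-1}$ rather than $(x+1)^{n}$, after which the stated formula drops out of the ordinary Leibniz rule. Concretely, I would first establish the \emph{reduced representation}
\[
  \pp_n(x) = \sum_{\nu=0}^{n-1} (-1)^\nu 2^{n-1-\nu}\,(x+\nu+1)\binom{x+\nu}{\nu}\,\Dop^\nu (x+1)^{n-1} \qquad (n \geq 1).
\]
This reduction is the one genuinely nontrivial step; everything afterwards is bookkeeping.

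To prove the reduced representation, I would start from \eqref{eq:loc-pn-def} and apply the finite-difference Leibniz rule (Lemma~\ref{lem:leibniz}) to the factorization $(x+1)^n = (x+1)\,(x+1)^{n-1}$. Since $\Dop^0(x+1)=x+1$, $\Dop^1(x+1)=1$, and higher differences of $x+1$ vanish, only two terms survive, yielding
\[
  \Dop^\nu (x+1)^n = (x+1)\,\Dop^\nu(x+1)^{n-1} + \nu\,\Dop^{\nu-1}(x+2)^{n-1}.
\]
Writing the forward shift as $E = I + \Dop$ and applying it to $(x+1)^{n-1}$ turns the last difference into $\nu\bigl(\Dop^\nu + \Dop^{\nu-1}\bigr)(x+1)^{n-1}$, so that
\[
  \Dop^\nu (x+1)^n = (x+\nu+1)\,\Dop^\nu(x+1)^{n-1} + \nu\,\Dop^{\nu-1}(x+1)^{n-1}.
\]
Substituting this into \eqref{eq:loc-pn-def} splits $\pp_n$ into two sums. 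In the first the factor $2^{n-\nu}=2\cdot 2^{n-1-\nu}$ appears and the $\nu=n$ term drops out because $\Dop^n(x+1)^{n-1}=0$. In the second the $\nu=0$ term vanishes, and the index shift $\nu\mapsto\nu+1$ together with the identity $(\nu+1)\binom{x+\nu+1}{\nu+1}=(x+\nu+1)\binom{x+\nu}{\nu}$ reproduces the same summand with coefficient $-2^{n-1-\nu}$. Recombining the two sums via $2-1=1$ gives the reduced representation. This is the main obstacle, requiring careful control of the index shift and the vanishing boundary terms.

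With the reduced form in hand, I would differentiate $k$ times. Using $(x+\nu+1)\binom{x+\nu}{\nu}=(x+\nu+1)_{\nu+1}/\nu!$ and the fact that $\tfrac{d}{dx}$ commutes with $\Dop$, so that $\tfrac{d^j}{dx^j}\Dop^\nu(x+1)^{n-1}=(n-1)_j\,\Dop^\nu(x+1)^{n-1-j}$, the ordinary Leibniz rule applied to each product $(x+\nu+1)_{\nu+1}\,\Dop^\nu(x+1)^{n-1}$ gives
\[
  \pp_n^{(k)}(x) = \sum_{\nu=0}^{n-1} (-1)^\nu 2^{n-1-\nu}\frac{1}{\nu!}
    \sum_{j=0}^k \binom{k}{j}(n-1)_j\,(x+\nu+1)_{\nu+1}^{(k-j)}\,\Dop^\nu(x+1)^{n-1-j}.
\]
Interchanging the two summations and truncating the inner sum at $\nu=n-1-j$ — legitimate because $\Dop^\nu(x+1)^{n-1-j}=0$ whenever $\nu>n-1-j$, and $n-1-j\geq n-1-k\geq 0$ by the hypothesis $n>k$ — produces exactly the asserted expression.
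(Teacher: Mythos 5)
Your proposal is correct and follows essentially the same route as the paper: both first rewrite $\pp_n$ in the reduced form $\sum_{\nu=0}^{n-1} (-1)^\nu 2^{n-1-\nu} (x+\nu+1)_{\nu+1} \frac{1}{\nu!} \Dop^\nu (x+1)^{n-1}$ via the finite-difference Leibniz identity $\Dop^\nu (x+1)^n = (x+\nu+1)\Dop^\nu(x+1)^{n-1} + \nu\Dop^{\nu-1}(x+1)^{n-1}$ and an index shift, and then apply the ordinary Leibniz rule to the product. Your detour through $E = I + \Dop$ and your explicit justification of the truncation at $\nu = n-1-j$ are harmless elaborations of steps the paper leaves implicit.
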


\begin{proof}
By Lemma~\ref{lem:leibniz} we have the identity
\[
  \Dop^\nu (x+1)^n = \nu \Dop^{\nu-1} (x+1)^{n-1} + (x+\nu+1)
    \Dop^\nu (x+1)^{n-1} \quad (\nu \geq 1).
\]
From \eqref{eq:loc-pn-def} we then derive that
\begin{align*}
  \pp_n(x)
    &= \sum_{\nu=0}^n (-1)^\nu 2^{n-\nu} (x+\nu)_\nu \frac{1}{\nu!}
       \Dop^\nu (x+1)^n \\
    &= \sum_{\nu=0}^{n-1} (-1)^\nu 2^{n-1-\nu} (x+\nu+1)_{\nu+1}
       \frac{1}{\nu!} \Dop^\nu (x+1)^{n-1}
\end{align*}
using the above identity and after some index shifting.
The $k$th derivative follows by the Leibniz rule applied to the terms
$(x+\nu+1)_{\nu+1}$ and $\Dop^\nu (x+1)^{n-1}$.
\end{proof}

\begin{prop} \label{prop:p0-deriv-k}
If $n = p+1$ with $p$ an odd prime, then
\[
  \pp_n^{(k)}(0) \equiv 0 \pmod{p} \quad (1 \leq k \leq n-2).
\]
\end{prop}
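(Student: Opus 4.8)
The plan is to start from the explicit formula for $\pp_n^{(k)}$ in Lemma~\ref{lem:pn-deriv}, specialize to $n=p+1$, evaluate at $x=0$, and reduce everything modulo $p$. Setting $x=0$ makes $n-1=p$, so the outer coefficient becomes $(n-1)_j=(p)_j=p(p-1)\cdots(p-j+1)$. Since every $j\geq 1$ (and we only ever reach $j\leq k\leq p-1<p$) forces a factor of $p$, all terms with $j\geq 1$ vanish modulo $p$, and only the $j=0$ term survives. This reduces the claim to showing
\[
  \sum_{\nu=0}^{p} (-1)^\nu 2^{p-\nu}
    \left[(x+\nu+1)_{\nu+1}^{(k)}\right]_{x=0}
    \frac{1}{\nu!} \Dop^\nu (x+1)^{p} \valueat{x=0} \equiv 0 \pmod p
\]
for $1\leq k\leq p-1$.

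Next I would invoke Lemma~\ref{lem:diff-xp}. Writing $\frac{1}{\nu!}\Dop^\nu (x+1)^p \valueat{x=0}=\frac{1}{\nu!}\Dop^\nu y^p \valueat{y=1}\equiv d_{p,\nu}\pmod p$, only the indices $\nu\in\{0,1,p\}$ contribute, so that
\[
  \pp_n^{(k)}(0) \equiv \sum_{\nu\in\{0,1,p\}} (-1)^\nu 2^{p-\nu}
    \left[(x+\nu+1)_{\nu+1}^{(k)}\right]_{x=0} \pmod p .
\]
The terms $\nu=0$ and $\nu=1$ are elementary: $(x+1)_1=x+1$ and $(x+2)_2=(x+2)(x+1)=x^2+3x+2$ are of low degree, so their $k$th derivatives at $0$ are nonzero only for small $k$, and by Fermat $2^p\equiv 2$, $2^{p-1}\equiv 1\pmod p$. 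Concretely the $\nu=0$ contribution is $2\,\delta_{k,1}$, and the $\nu=1$ contribution is $-3$ for $k=1$, $-2$ for $k=2$, and $0$ for $k\geq 3$.

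The main obstacle is the $\nu=p$ term, namely $-\bigl[(x+p+1)_{p+1}^{(k)}\bigr]_{x=0}$ (using $(-1)^p=-1$ and $2^0=1$). Here I would use the polynomial congruence $\prod_{j=0}^{p-1}(x+j)\equiv x^p-x\pmod p$, the same reduction over the residues underlying Lemma~\ref{lem:fac-p-1}. After the shift $u=x+1$ this gives
\[
  (x+p+1)_{p+1} = (u+p)_{p+1} \equiv u\,(u^p-u) = (x+1)^{p+1}-(x+1)^2 \pmod p ,
\]
since the factor $u+p\equiv u$ folds back. Differentiating $k$ times and evaluating at $x=0$ (equivalently $u=1$) then produces $(p+1)_k-(2)_k$, which is $\equiv -1$ for $k=1$, $\equiv -2$ for $k=2$ (as $(p+1)_2=(p+1)p\equiv 0$), and $\equiv 0$ for $3\leq k\leq p-1$.

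Finally I would add the three contributions. For $k=1$ the total is $2-3+1=0$; for $k=2$ it is $0-2+2=0$; and for $3\leq k\leq p-1$ every term vanishes individually. Hence $\pp_n^{(k)}(0)\equiv 0\pmod p$ throughout the range $1\leq k\leq n-2$, as claimed. The only genuinely delicate step is the $\nu=p$ evaluation; the rest is bookkeeping driven by Lemmas~\ref{lem:pn-deriv} and~\ref{lem:diff-xp}.
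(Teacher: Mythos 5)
Your argument is correct. The opening reductions coincide exactly with the paper's: both invoke Lemma~\ref{lem:pn-deriv}, discard the $j\geq 1$ terms because $(n-1)_j=(p)_j\equiv 0\pmod p$, and then use Lemma~\ref{lem:diff-xp} to restrict the surviving sum to $\nu\in\{0,1,p\}$. Where you diverge is the endgame. The paper combines the three remaining terms into $-\bigl(A(x)B(x)\bigr)^{(k)}\valueat{x=0}$ with $A(x)=x^2+x$ and $B(x)=1+(x-1)_{p-1}$, and then kills every Leibniz term at once: $A(0)=0$ forces $j\geq 1$, while Lemma~\ref{lem:fac-p-1} gives $B^{(k-j)}(0)\equiv-\delta_{k-j,p-1}$, which never fires since $k-j\leq p-2$. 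You instead evaluate the three contributions separately, reducing the $\nu=p$ term via $(x+p+1)_{p+1}\equiv(x+1)^{p+1}-(x+1)^2\pmod p$ (which is the same underlying fact $\prod_{j=0}^{p-1}(X-j)\equiv X^p-X$ that drives Lemma~\ref{lem:fac-p-1}, just packaged differently), and then verify the cancellations $2-3+1=0$ for $k=1$ and $-2+2=0$ for $k=2$ by hand, with all terms vanishing individually for $3\leq k\leq p-1$. Your computations check out, including the legitimacy of differentiating a coefficientwise polynomial congruence mod $p$. What the paper's factorization buys is the absence of any case split on $k$ — the vanishing is structural rather than the result of a numerical coincidence — whereas your version is more elementary and makes the mechanism of cancellation completely explicit; it also bypasses Lemma~\ref{lem:fac-p-1} entirely. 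Either route is a complete proof.
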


\begin{proof}
Since $n-1 = p$, we obtain by Lemma~\ref{lem:pn-deriv} that
\begin{align*}
  \pp_n^{(k)}(0)
    &\equiv \sum_{\nu=0}^{n-1} (-1)^\nu 2^{n-1-\nu}
      \, (x+\nu+1)_{\nu+1}^{(k)} \valueat{x = 0}
      \, \frac{1}{\nu!} \Dop^\nu x^{n-1} \valueat{x = 1} \\
    &\equiv \sum_{\nu \in \{ 0,1,p \}} (-1)^\nu 2^{n-1-\nu}
      \, (x+\nu+1)_{\nu+1}^{(k)} \valueat{x = 0} \\
    &\equiv \left( 2 (x+1) - (x+2)_2
      - (x+p+1)_{p+1} \right)^{(k)} \! \valueat{x = 0} \pmod{p}
\end{align*}
applying Lemma~\ref{lem:diff-xp} and Fermat's little theorem.
Using the identities
\begin{align*}
  -(x^2+x) &= 2 (x+1) - (x+2)_2, \\
  (x+p+1)_{p+1} &\equiv (x+p-1)_{p-1}(x+p+1)_2 \equiv (x-1)_{p-1}(x^2+x)
    \pmod{p},
\end{align*}
we achieve that
\begin{align*}
  \pp_n^{(k)}(0)
    &\equiv -\left( A(x) B(x) \right)^{(k)} \! \valueat{x = 0} \\
    &\equiv -\sum_{j=1}^k \binom{k}{j} A(x)^{(j)} B(x)^{(k-j)}
      \! \valueat{x = 0} \pmod{p}
\end{align*}
where
\[
  A(x) = x^2+x, \quad B(x) = 1+(x-1)_{p-1}.
\]
By Lemma~\ref{lem:fac-p-1} we have
\[
  B(x)^{(k-j)} \! \valueat{x = 0} \equiv - \delta_{k-j,p-1} \pmod{p}.
\]
Since $j \geq 1$ and $k-j < n-2 = p-1$, it follows that
$\pp_n^{(k)}(0) \equiv 0 \pmod{p}$.
\end{proof}

\begin{proof}[Proof of Theorem \ref{thm:qn-irr}]
Let $p$ be an odd prime and $n = p+1$.
By Proposition~\ref{prop:p-poly} we have the decomposition
$\pp_n(x) = x(x+1) \qq_n(x)$ with
\begin{align*}
  \pp_n(x) &= x^n + \alpha_{n-1} x^{n-1} + \cdots + \alpha_1 x, \\
  \qq_n(x) &= x^{n-2} + \beta_{n-3} x^{n-3} + \cdots + \beta_0.
\end{align*}
Hence, we have the relations
\begin{align*}
  \beta_0 &= \alpha_1, \\
  \beta_1 &= \alpha_2 - \beta_0, \\
          & \;\; \vdots \\
  \beta_{n-3} &= \alpha_{n-2} - \beta_{n-4}.
\end{align*}
Since $n-2 < p$, we obtain by Proposition~\ref{prop:p0-deriv-k} that
\[
  \alpha_\nu \equiv \pp_n^{(\nu)}(0)/\nu! \equiv 0 \pmod{p}
    \quad (1 \leq \nu \leq n-2).
\]
It easily follows by induction that
$p \pdiv \alpha_\nu$ for $1 \leq \nu \leq n-2$ implies that
$p \pdiv \beta_\nu$ for $0 \leq \nu \leq n-3$.
Moreover, we derive by \eqref{eq:loc-zn-2} that
\[
  \beta_0 = \alpha_1 = \pp_n'(0) = (n-1) 2^n ( 2^n - 1 ) B_n/n.
\]
Now, we have to show that $\ord_p \beta_0 = 1$. It is well known that $B_2 = 1/6$ 
and $B_4 = -1/30$. For $p=3$ we compute $\beta_0 = -6$. Let $p \geq 5$.
By Kummer congruences via Proposition~\ref{prop:kummer-congr} and Fermat's little 
theorem we conclude that
\[
  2^n ( 2^n - 1 ) B_n/n \equiv 2^2 ( 2^2 -1 ) B_2/2 \equiv 1 \pmod{p}
\]
and further that $\ord_p \beta_0 = 1$, since $p = n-1$. Altogether, this shows 
that $\qq_n$ is an Eisenstein polynomial and therefore $\qq_n$ is irreducible 
over $\ZZ[x]$.
\end{proof}

\appendix

\section{Computations}

\begin{tbl} \label{tbl:comp-p}
Polynomials $\pp_n$:
\begin{align*}
  \pp_1(x) &= x + 1. \\
  \pp_2(x) &= x^2 + x. \\
  \pp_3(x) &= x^3 - 3 x - 2. \\
  \pp_4(x) &= x^4 - 2 x^3 - 9 x^2 - 6 x. \\
  \pp_5(x) &= x^5 - 5 x^4 - 15 x^3 + 5 x^2 + 30 x + 16. \\
  \pp_6(x) &= x^6 - 9 x^5 - 15 x^4 + 65 x^3 + 150 x^2 + 80 x. \\
  \pp_7(x) &= x^7 - 14 x^6 + 210 x^4 + 315 x^3 - 196 x^2 - 588 x - 272. \\ 
  \pp_8(x) &= x^8 - 20 x^7 + 42 x^6 + 448 x^5 + 105 x^4 - 2492 x^3 - 4116 x^2
              -1904 x.  
\end{align*}
\end{tbl}
\medskip

\begin{tbl} \label{tbl:comp-q}
Polynomials $\qq_n$:
\begin{align*}
  \qq_3(x) &= x - 2. \\
  \qq_4(x) &= x^2 - 3 x - 6. \\
  \qq_5(x) &= x^3 - 7 x^2 - 2 x + 16. \\
  \qq_6(x) &= x^4 - 10 x^3 - 5 x^2 + 70 x + 80. \\
  \qq_7(x) &= x^5 - 16 x^4 + 31 x^3 + 164 x^2 - 44 x - 272. \\
  \qq_8(x) &= x^6 - 21 x^5 + 63 x^4 + 385 x^3 - 280 x^2 - 2212 x - 1904. \\
  \qq_9(x) &= x^7 - 29 x^6 + 183 x^5 + 377 x^4 - 2512 x^3 - 5076 x^2 + 3088 x
              + 7936. \\
  \qq_{10}(x) &= x^8 - 36 x^7 + 306 x^6 + 504 x^5 - 7119 x^4 - 15204 x^3 
                 + 27804 x^2 + 99216 x + 71424.
\end{align*}
The polynomials $\qq_4, \ldots, \qq_{10}$ are irreducible over $\ZZ[x]$. 
\end{tbl}

\section*{Acknowledgment}

The author would like to thank the referee for valuable suggestions.

\bibliographystyle{amsplain}

\bigskip

\end{document}